\documentclass{amsart}
\usepackage{amssymb}
\usepackage{amsmath}
\numberwithin{equation}{section}
\usepackage{amsthm}
\usepackage{appendix}
\usepackage{bookmark}
\usepackage{setspace}
\usepackage{mathrsfs}
\usepackage{mathtools}
\usepackage{tikz-cd}
\usepackage{manfnt}
\usepackage{fullpage}
\setlength{\parindent}{0pt}

\newtheorem{thm}{Theorem}
\numberwithin{thm}{section}
\newtheorem{prop}[thm]{Proposition}
\newtheorem{lem}[thm]{Lemma}
\newtheorem{cor}[thm]{Corollary}
\theoremstyle{remark}
\newtheorem{rem}[thm]{Remark}
\theoremstyle{definition}

\newtheorem{eg}[thm]{Example}

\newcommand{\1}{\mathbf{1}}

\newcommand{\C}{\mathbf{C}}
\newcommand{\cali}[1]{\mathcal{#1}}
\newcommand{\chx}[1]{\langle #1\rangle}

\newcommand{\comment}[1]{}

\newcommand{\F}{\mathbf{F}}

\newcommand{\got}[1]{\mathfrak{#1}}

\newcommand{\plim}{\varprojlim}

\newcommand{\Q}{\mathbf{Q}}

\newcommand{\R}{\mathbf{R}}

\newcommand{\Scr}[1]{\mathscr{#1}}

\newcommand{\Z}{\mathbf{Z}}

\newcommand{\Zp}{\mathbf{Z}_p}

\newcommand{\mes}{\mathrm{Mes}}

\title{Sum Expressions for Kubota-Leopoldt $p$-adic $L$-functions}

\author{Luochen Zhao}

\date{Jan 18, 2022}

\subjclass[2010]{11S40 (primary); 11S80, 11R23, 11F67, 11Y35 (secondary).}

\keywords{Infinite sum, Kubota-Leopoldt $p$-adic $L$-functions, Ferrero-Greenberg formula, Stickelberger elements, numerical values of $p$-adic $L$-functions}

\address{Department of Mathematics, Johns Hopkins University, 404 Krieger Hall, 3400 N.~Charles Street, Baltimore, MD 21218, USA}
\email{lzhao39@jhu.edu}
\begin{document}
	
	\maketitle
	
	\begin{abstract}
		When $p$ is an odd prime, Delbourgo observed that any Kubota-Leopoldt $p$-adic $L$-function, when multiplied by an auxiliary Euler factor, can be written as an infinite sum. We shall establish such expressions without restriction on $p$, and without the Euler factor when the character is nontrivial, by computing the periods of appropriate measures. As an application, we will reprove the Ferrero-Greenberg formula for the derivative $L_p'(0,\chi)$. We will also discuss the convergence of sum expressions in terms of elementary $p$-adic analysis, as well as their relation to Stickelberger elements; such discussions in turn give alternative proofs of the validity of sum expressions.
	\end{abstract}

	\setcounter{tocdepth}{1}
	
	\section{Introduction}
	
	\subsection{Motivation}
	\label{subsec.introduction-motivation}
	 
	The facts that we mention below without proof, if not specified otherwise, can be found in Chapter 3 of \cite{Hida}, Chapter 2 of \cite{Kobl}, Chapters 4 and 10 of \cite{Lang}, and Chapters 5 and 12 of \cite{Wash}. All limits, when not emphasized, are with respect to the $p$-adic topology.
	
	\vspace{3mm}
	
	Given an even Dirichlet character $\chi$, it is well known that there exists a unique $p$-adic meromorphic function $L_p(s,\chi)$ for $s\in \Z_p$ that satisfies the interpolation property:
	\begin{align}
		\label{equation.interpolation-classical}
		\text{for all integer }n\ge 1,\ L_p(1-n,\chi) = (1-\chi\omega^{-n}(p)p^{n-1})L(1-n,\chi\omega^{-n}),
	\end{align}
	
	where $L(s,\chi)$ for $s\in \C$ is the usual Dirichlet $L$-function. As is customary, we refer to these $L_p(s,\chi)$ as Kubota-Leopoldt $p$-adic $L$-functions, or just $p$-adic $L$-functions for short. Also, when $\chi$ is the trivial character $\1$ (resp.~is of conductor $p^t$ for some $t\in\Z_{>0}$), we refer to $L_p(s,\1)$ (resp.~$L_p(s,\chi)$) as the $p$-adic zeta function (resp.~a twist of the $p$-adic zeta function). 
	
	\vspace{3mm}
	
	One construction of the Kubota-Leopoldt $p$-adic $L$-functions, as initiated by Mazur, is from their integral representations. For example, when the conductor of $\chi$ is a power of $p$, for any $c\in \Z_{>0}$ prime to $p$, there exists a $p$-adic measure $\mu_{1,c^{-1}}$ \cite[§II.5]{Kobl} (denoted $E_{1,c}$ in \cite[§2.2]{Lang}), such that \cite[p107]{Lang}
	\begin{align}
		\label{equation.integral representation}
		-L_p(s,\chi) = \frac{1}{1-\chi(c)\chx{c}^{1-s}}\int_{\Z_p^\times}\omega^{-1}\chi(x) \chx{x}^{-s} \mu_{1,c^{-1}}(x).
	\end{align}
	
	In a different flavor, thanks to works of Delbourgo, it is also possible to write $p$-adic $L$-functions more explicitly as a conditionally convergent infinite sum. For instance, when $p$ is odd, Delbourgo \cite{De06} obtained the following sum expression:
	\begin{align}
		\label{equation.delbourgo-2}
		L_p(s,\omega^{\beta+1}) = \frac{1}{2(1-\omega^{\beta+1}(2)\chx{2}^{1-s})} \sum_{n\ge 1}
		\left( \sum_{\substack{p^{n-1}\le m<p^n \\ p\nmid m}}
		\frac{\omega^\beta(m)}{\chx{m}^s}(-1)^{m+1}
		\right),
	\end{align}
	
	where $\beta\in \Z/(p-1)$ and $s\in \Z_p$. His method, which crucially exploits a periodicity nature of the measure $\mu_{1,2^{-1}}$, will be explained in §\ref{subsec.introduction-method} below.
	
	\vspace{3mm}
	
	We conclude by summarizing the importance of sum expressions beyond their inherent appeal: First, they provide a straightforward way to numerically approximate the special values of $L_p(s,\chi)$ for $s\in\Z_{>1}$, which are outside of the classical range. In turn, the nonvanishing of such values would imply the finiteness of certain Iwasawa modules (see, e.g., \cite[Proposition 3.3.7]{Co15}). Second, their theoretical significance is illustrated in the study of derivatives, and higher derivatives, of $p$-adic $L$-functions, thanks to their explicit nature. We shall demonstrate this point by giving a direct proof of the Ferrero-Greenberg formula for the derivative of the $p$-adic $L$-function in §4, a result that is known to be a key link in the proof of Gross-Stark conjecture in the case of $\Q$ \cite[§4]{Gr81}.
	
	\subsection{Notation}
	\label{subsec.introduction-notation}
	
	Fix a rational prime $p$. Let $\C_p$ be the completion of an algebraic closure $\bar{\Q}_p$ of $\Q_p$, and let $\got{o}_p$ be its ring of integers. Fix also an algebraic closure $\bar{\Q}$ of $\Q$ and embeddings of $\bar{\Q}$ into $\C$ and $\C_p$, so we can identify $\bar{\Q}$ as a subfield of both. For $a,b\in \C_p$ and $r\in \Z_{\ge 0}$, by $a\equiv b \bmod p^r$ we mean that $a-b\in p^r\got{o}_p$. Thus if $a,b\in \Q_p$, then $a\equiv b\bmod p^r$ means $a-b\in p^r\Z_p$. In the rest of this article, unless specified otherwise, $N$ will always denote a positive integer bigger than 1 and is prime to $p$, and $q=p^f>1$ will denote the smallest power of $p$ such that $q\equiv 1\bmod N$. The letter $\chi$ shall denote a Dirichlet character of conductor $N$, while $\psi$ shall denote one of conductor a power of $p$, which will be simply denoted by $\mathbf{1}$ when it is trivial. When $h>1$ is an integer, for any $a\in \Z/h$, we denote by $a^\flat_h$,$a^\sharp_h$ the unique integers such that $a^\flat_h\in [0,h)$, $a^\sharp_h\in (0,h]$ and $a\equiv a^\flat_h\equiv a^\sharp_h\bmod h$.
	
	\subsection{Overview of the main result}
	\label{subsec.introduction-overview}
	
	Our main achievement in this paper is the establishment of sum expressions for all Kubota-Leopoldt $p$-adic $L$-functions in complete generality, extending the previous works of Delbourgo \cite{De06,De09,De09-2}. The exact statement is as follows:
	\begin{thm}
		\label{thm.main}
		Let $p$ be a prime, $N\in\Z_{>1}$ be prime to $p$, and let $q=p^f>1$ be the smallest $p$-power such that $p^f\equiv 1\bmod N$. Suppose $\chi$ is a Dirichlet character of conductor $N$, and $\psi$ is one of conductor a power of $p$. Then we have
		\begin{enumerate}
			\item[(i)] for the (twist of) $p$-adic zeta function:
			\begin{align*}
				L_p(s,\psi\omega) = -\frac{1}{1-\psi\omega(N)\chx{N}^{1-s}}\lim_{n\to \infty}\sum_{1\le m< q^n, p\nmid m} \frac{\psi(m)}{\chx{m}^s}m^\sharp_N.
			\end{align*}
			
			\item[(ii)] for the (twist of) $p$-adic Dirichlet $L$-function:
			\begin{align*}
				L_p(s,\chi\psi\omega) = -\lim_{n\to \infty} \sum_{1\le m< q^n, p\nmid m} \frac{\psi(m)}{\chx{m}^s}\sum_{1\le a< m^\flat_N} \chi(a).
			\end{align*}
		\end{enumerate}
	\end{thm}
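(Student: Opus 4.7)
The plan is to derive both formulas by analyzing the Mazur-style integral representation of $L_p$. I begin by invoking (a generalization of) (\ref{equation.integral representation}): for a Dirichlet character $\eta$ whose conductor divides $Np^t$ and any positive integer $c$ coprime to $Np$, there is a measure $\mu_{1,c^{-1}}$ on $\Z_p^\times$ such that
\begin{align*}
-L_p(s,\eta)\bigl(1-\eta(c)\chx{c}^{1-s}\bigr) = \int_{\Z_p^\times} \omega^{-1}\eta(x)\,\chx{x}^{-s}\,d\mu_{1,c^{-1}}(x).
\end{align*}
I specialize to $c = N$: in case (i), $\eta = \psi\omega$ and the prefactor matches the Euler-like factor in the theorem; in case (ii), $\eta = \chi\psi\omega$ and $\chi(N) = 0$ forces the prefactor to collapse to $1$.

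I then approximate the integral by Riemann sums indexed by the partition $\{a + q^n\Z_p : 1 \le a < q^n,\; p\nmid a\}$ of $\Z_p^\times$. The central calculation---the Delbourgo-style periodicity underlying (\ref{equation.delbourgo-2})---is the explicit evaluation
\begin{align*}
\mu_{1,N^{-1}}(a + q^n\Z_p) = a^\sharp_N - \tfrac{N+1}{2},
\end{align*}
which follows from the defining formula of the Mazur measure combined with $q^n \equiv 1\bmod N$. For case (i) the Riemann sum thus equals $\sum\psi(a)\chx{a}^{-s}a^\sharp_N$ minus the shift $\tfrac{N+1}{2}\sum\psi(a)\chx{a}^{-s}$. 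To eliminate the shift I prove $\lim_n\sum_{1\le a<q^n,\,p\nmid a}\psi(a)\chx{a}^{-s}=0$ $p$-adically, via an elementary Mahler-expansion argument that exploits the fact that the cardinality $q^n(p-1)/p$ itself tends $p$-adically to $0$.

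For case (ii) the analogous Riemann sum equals $\sum\chi(a)\psi(a)\chx{a}^{-s}(a^\sharp_N - \tfrac{N+1}{2})$, and I must recognize $\chi(a)a^\sharp_N$ as the partial character sum $\sum_{1\le b<a^\flat_N}\chi(b)$ in the limit. This I achieve by an Abel-summation-style reshuffling over residues modulo $N$, using the nontriviality of $\chi$ (so that $\sum_{b=1}^{N}\chi(b) = 0$) and the vanishing of the constant shift as in case (i). The principal obstacle lies in this final reshuffling: it requires careful bookkeeping of boundary terms caused by the mismatch between $a^\sharp_N\in(0,N]$ and $a^\flat_N\in[0,N)$, and it is precisely where the hypothesis that $\chi$ has exact conductor $N$ (rather than trivial) is used in an essential way.
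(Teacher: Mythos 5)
Your treatment of part (i) is essentially the paper's own argument: the Mazur measure $\mu_{1,N^{-1}}$ with $c=N$, the period formula $\mu_{1,N^{-1}}(a+q^n\Z_p)=a^\sharp_N-\tfrac{N+1}{2}$ along the subsequence $q^n\equiv 1\bmod N$, and the vanishing of $\lim_n\sum_{1\le a<q^n,\,p\nmid a}\psi(a)\chx{a}^{-s}$ (your Mahler-basis justification, via $\sum_{0\le a<M}\binom{a}{k}=\binom{M}{k+1}\to 0$ as $M\to 0$ $p$-adically, is a legitimate substitute for the indefinite-sum argument the paper cites). That half is fine.

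Part (ii), however, has a genuine gap, and it sits upstream of the ``reshuffling'' you flag as the principal obstacle. You state the integral representation for $c$ coprime to $Np$ and then specialize to $c=N$, which violates your own hypothesis: the construction of $\mu_{1,c^{-1}}$ (Lang's $E_{1,c}$) on $\varprojlim\Z/Np^n$ requires $c$ invertible modulo $Np^n$, so $c=N$ is not available, and the ``collapse'' of the prefactor via $\chi(N)=0$ cannot be engineered this way. Worse, once $\chi$ has conductor $N$ prime to $p$, the integrand $\omega^{-1}\chi\psi\omega(x)\chx{x}^{-s}$ is not a continuous function on $\Z_p^\times$ at all ($\chi$ does not factor through any quotient of $\Z_p^\times$), so the Riemann sums $\sum_m\chi(m)\psi(m)\chx{m}^{-s}(m^\sharp_N-\tfrac{N+1}{2})$ are not Riemann sums of an integral against a measure on $\Z_p^\times$ and have no reason to converge to $-L_p(s,\chi\psi\omega)$. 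Indeed the weight your expression assigns to a residue class $r\bmod N$, namely $\chi(r)(r^\sharp_N-\tfrac{N+1}{2})$, differs from the paper's weight $\sum_{1\le a<r^\flat_N}\chi(a)$ by a non-constant function of $r\bmod N$, and such a discrepancy does not vanish in the limit. The missing idea is the paper's measure $\mu_{L_\chi}$ attached to the rational function $L_\chi(t)=\sum_{1\le a<N}\chi(a)\,t^a/(t^N-1)$: this lies in $\Z_p[\chi][[t-1]]$ precisely because $\sum_{1\le a<N}\chi(a)=0$, it satisfies the Euler-factor-free representation $-L_p(s,\chi\psi)=\int_{\Z_p^\times}\omega^{-1}\psi(x)\chx{x}^{-s}\mu_{L_\chi}(x)$ (the twist by $\chi$ is built into the measure, not the integrand), and its periods are computed by evaluating $L_\chi$ at $p^n$-th roots of unity, yielding $\mu_\chi(m+p^n\Z_p)=-L(0,\chi)+\sum_{1\le a<m}\chi(a)$ when $p^n\equiv 1\bmod N$. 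Without this (or an equivalent device on $\varprojlim\Z/Np^n$ with a legitimate auxiliary $c$, which would reintroduce an Euler factor), your route to (ii) does not go through.
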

	
	\begin{rem}
		After the draft of this paper has been finalized, we are informed that Knospe and Washington \cite[Theorem 2.4, Corollary 2.5]{KW21} have obtained (i) and a weaker version of (ii). For this reason we only sketch the proof of (i) in §\ref{subsec.introduction-method}. Our formula in (ii) may be regarded as stronger as it removes the auxiliary Euler factor away from $p$ that is present in \cite{KW21}.
	\end{rem}
	
	\begin{rem}
		While the Kubota-Leopoldt $p$-adic $L$-functions are in fact defined on the larger domain \cite[Theorem 5.9]{Wash}
		\begin{align*}
			\{s\in \C_p: |s|_p<|2p^{\frac{p-2}{p-1}}|_p^{-1}\},
		\end{align*}
		
		in this article we will exclusively treat the case when $s$ varies in $\Z_p$, for it is sufficient for our purposes, and for saving us from additional discussions on analysis. Basically, when $s\notin \Z_p$, using our approach one can derive the same limit formulas, and even estimates on the error term if one so desires, by using the asymptotic identity $\chx{x}^s= \chx{a}^s + O(p^n s)$ for all $a\in \Z_p^\times$, $x\in a+p^n\Z_p$ and $s$ in the above range.
	\end{rem}

	\begin{rem}
		It is worth pointing out that, when $s=1$, $\psi=\1$ and $N=2$, the formula in (i) is virtually due to Koblitz \cite[p461, Remark 1]{Ko79} (see also \cite[Theorem 59.1]{Schi}). Formulas of a similar flavor are recorded in Remark \ref{remark.leopoldt}.
	\end{rem}
	
	\subsection{Novelty in this paper} 
	\label{subsec.introduction-whatsnew}
	
	As remarked earlier, the statement about the sum expressions for $p$-adic Dirichlet $L$-functions without the Euler factor is new. The explicit period formulas that lead to them, however, were implicit in the works of Iwasawa on Stickelberger elements, and were studied in the works of Ferrero \cite{Fe78} and Ferrero-Washington \cite{FW79}. Nevertheless, the measure-theoretic form in which the formulas is stated does not seem to have been recorded in the literature, not does our proof of them by computing certain periods of a particular rational function. Finally, the analytic formulas listed in Remark \ref{remark.leopoldt} (when $N\ne 2$) and the expressions of higher derivatives of $p$-adic $L$-functions in \eqref{equation.taylor} seem to be new.
	
	\vspace{3mm}
	
	Another aspect we supplement to previous works on sum expressions is an alternative explanation of their convergence. This only uses some elementary $p$-adic analysis, as well as a trick of Ferrero-Greenberg. We believe it makes these infinite sums less mysterious, and exemplifies the naturalness and significance of the Ferrero-Greenberg permutation in their studies. The last point is further strengthened in our re-derivation of the derivative formula, a useful application that is not pointed out in preceding works.

	\subsection{The method}
	\label{subsec.introduction-method}
	
	To establish Theorem \ref{thm.main}, we will follow the method of Delbourgo \cite{De06}, complemented by its measure-theoretic reflection. In concrete terms, we invoke a machinery that yields infinite sum expressions of $p$-adic $L$-functions based on three ingredients:
	\begin{enumerate}
		\item [(a)] the integral representation of the $p$-adic $L$-functions;
		
		\item [(b)] computability of various periods $\mu(a+p^n\Z_p)$ for all $a\in \Z_p$ and $n\in \Z_{\ge 0}$, where $\mu$ is the attached measure in (a);
		
		\item [(c)] \textit{uniform periodicity} of the above periods, i.e., there exists $f\in \Z_{>0}$ such that for all $m\in \Z_{\ge 0}$ and all $n,n'\in \Z_{\ge 0}$ such that $p^n> m$, $p^{n'}> m$ and $n\equiv n' \bmod f$, we have $\mu(m+p^n\Z_p) = \mu(m+p^{n'}\Z_p)$.
	\end{enumerate}
	
	We explain here how these can be assembled together to give a quick proof of the first part of Theorem \ref{thm.main}: The integral representation is provided by \eqref{equation.integral representation} with $c=N$, and we have the explicit formula \cite[p38]{Lang}
	\begin{align*}
		\mu_{1,N^{-1}}(a+p^n\Z_p) = \frac{a}{p^n}-\frac{1}{2} - N\left(\frac{(a/N)^\flat_{p^n}}{p^n}-\frac{1}{2}\right),
	\end{align*}
	
	where $n\in \Z_{\ge 0}$ and $0\le a<p^n$. It can be worked out that $\mu_{1,N^{-1}}(a+p^n\Z_p) = (\frac{m}{p^n})^\sharp_N - \frac{N+1}{2}$ (\textit{cf.}~\cite[Theorem 3.1]{KW21}). Therefore, by taking the Riemann sums of \eqref{equation.integral representation}, we have
	\begin{align*}
		-(1-\psi\omega(N)\chx{N}^{1-s})L_p(s,\psi\omega) = \lim_{n\to\infty}\sum_{1\le m< p^n, p\nmid m} \frac{\psi(m)}{\chx{m}^s}\left[\left(\frac{m}{p^n}\right)^\sharp_N -\frac{N+1}{2}\right].
	\end{align*}
	
	This is not yet an infinite sum, since the $m$-th coefficient, $a_m(n) = (\frac{m}{p^n})^\sharp_N-\frac{N+1}{2}$, depends on $n$. Still, one can see that their dependence on $n$ is uniform and periodic in the sense of (c) above. Thus, taking only $n$ to be divisible by $f$, we have the formula in Theorem \ref{thm.main}.(i), by noting that $\lim_{n\to \infty} \sum_{1\le m< p^n, p\nmid m} \psi(m)\chx{m}^{-s}=0$.

	\begin{rem}
		When $N=2$ this reproduces the  argument in \cite{De06}, by \cite[Proposition 4.3.4]{Lang}.
	\end{rem}
	
	\subsection{Further developments}
	\label{subsec.introduction-further}
	
	The sum expressions also exist for $p$-adic Hecke $L$-functions of totally real fields under an analog of Heegner hypothesis introduced by Cassou-Nogu\`es, and will be treated in a forthcoming paper. These sum expressions thus give rise to generalizations of the Ferrero-Greenberg derivative formula, and we hope they could shed more light on the Gross-Stark conjecture. 
	
	\vspace{3mm}
	
	In a separate paper, we will discuss the possibility of writing the $p$-adic $L$-functions of cuspidal forms as an infinite sum. In this case, the integral representation (of an allowable root of the Hecke polynomial) is known by works of Amice-V\'elu and Vishik (see, e.g.,\cite{MTT}), and the periods are in a sense computable, being modular integrals. We are, however, not certain if uniform periodicity holds for them. For example, when the weight is $2$ and the form is rational, the periodicity implies the attached elliptic curve has multiplicative reduction at $p$, as well as the vanishing of abundant modular integrals.

	\subsection{Acknowledgement}
	
	The author is very thankful to Professor Antonio Lei for suggesting this direction and the various topics it covers, as well as reviewing, discussing and correcting the many drafts that led to this paper; we owe this paper to him. The author is also thankful to Professor Daniel Delbourgo for his advice and for pointing out the paper \cite{KW21}. Finally we are grateful to the referee for a careful reading and helpful suggestions.

	\section{Background on $p$-adic Measures}
	\label{sec.background}
	
	In this section we give a quick recapitulation of $p$-adic measure theory; the details can be found in the standard textbooks listed in §\ref{subsec.introduction-motivation}. 
	
	\vspace{3mm}
	
	Let $R$ be the ring of integers of some $p$-adic field over $\Q_p$. Denote by $\cali{C}(\Z_p,R)$ the set of all continuous functions from $\Z_p$ to $R$, topologized by the sup-norm. Recall a measure on $\Z_p$ valued in $R$ is a continuous $R$-linear map $\cali{C}(\Z_p,R) \to R$, and we denote by $\mes(\Z_p,R)$ the set of all such linear maps. For $f\in \cali{C}(\Z_p,R)$ and $\mu\in \mes(\Z_p,R)$, their pairing will be written as $\int_{\Z_p} f(x)\mu(x)$, which is explicitly given by the limit of Riemann sums, $\lim_{n\to \infty}\sum_{0\le a< p^n}f(a)\mu(a+p^n\Z_p)$.
	
	\vspace{3mm}
	The space of measures can in fact be understood algebraically. By restricting the Amice transform, defined a priori on locally analytic distributions, we have an isomorphism $\cali{F}: \mes(\Z_p,R) \xrightarrow{\sim} R[[t-1]]$, where $R[[t-1]]$ is the ring of formal power series in $t-1$, equipped with the $(p,t-1)$-adic topology. As such, for any $F\in R[[t-1]]$ and $\mu\in \mes(\Z_p,R)$, we shall denote $\cali{F}(\mu)(t)$ by $\Scr{A}_\mu(t)$, and $\cali{F}^{-1}(F(t))$ by $\mu_F$. The power series realization of a measure, to us, is most useful in the following
	\begin{prop}
		For any $\mu\in \mes(\Z_p,R)$, $n\in\Z_{\ge 0}$ and $a\in \Z_p$, we have
		\begin{align*}
			\mu(a+p^n\Z_p) = \frac{1}{p^n}\sum_{\zeta:\zeta^{p^n}=1}\zeta^{-a}\Scr{A}_\mu(\zeta),
		\end{align*}
		
		where the sum is over all $p^n$-th root of unity in $\C_p$.
	\end{prop}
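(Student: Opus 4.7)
The plan is to express the characteristic function of the coset $a+p^n\Z_p$ as an average of characters and then integrate against $\mu$, recognizing each resulting character integral as a value of the Amice transform at a $p^n$-th root of unity.

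More concretely, the first step is the classical orthogonality of characters on the finite group $\Z/p^n$: for every $x\in \Z_p$,
\begin{align*}
\mathbf{1}_{a+p^n\Z_p}(x) \;=\; \frac{1}{p^n}\sum_{\zeta^{p^n}=1}\zeta^{x-a},
\end{align*}
where the sum runs over all $p^n$-th roots of unity in $\C_p$. Note that each such $\zeta$ lies in $\got{o}_p$ and satisfies $|\zeta-1|_p<1$, so the function $x\mapsto \zeta^x$, defined by $\zeta^x=(1+(\zeta-1))^x=\sum_{k\ge 0}\binom{x}{k}(\zeta-1)^k$, is a well-defined element of $\cali{C}(\Z_p,\got{o}_p[\zeta])$.

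The second step is to pair both sides of the identity against $\mu$, after extending scalars from $R$ to $R[\zeta]$ for the finitely many $\zeta$ appearing. Since the displayed identity is a finite sum of continuous functions, this is legal and yields
\begin{align*}
\mu(a+p^n\Z_p) \;=\; \int_{\Z_p}\mathbf{1}_{a+p^n\Z_p}(x)\,\mu(x) \;=\; \frac{1}{p^n}\sum_{\zeta^{p^n}=1}\zeta^{-a}\int_{\Z_p}\zeta^x\,\mu(x).
\end{align*}

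The final step is to identify $\int_{\Z_p}\zeta^x\,\mu(x)$ with $\Scr{A}_\mu(\zeta)$. By construction the Amice transform satisfies $\Scr{A}_\mu(t)=\int_{\Z_p}(1+(t-1))^x\,\mu(x)$ as a formal power series in $t-1$, and because $|\zeta-1|_p<1$ this formal identity evaluates meaningfully at $t=\zeta$ to give the stated integral. Substituting back yields the desired formula.

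The only mildly technical point, and the one I would treat carefully, is the justification that the Amice transform really evaluates at $\zeta$ to $\int_{\Z_p}\zeta^x\,\mu(x)$; but this is immediate from writing $\zeta^x$ as its binomial series and invoking continuity of $\mu$ on $\cali{C}(\Z_p,R[\zeta])$. The rest is purely the orthogonality identity for $p^n$-th roots of unity, so there is no real obstacle beyond bookkeeping.
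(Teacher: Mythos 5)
Your proposal is correct. The paper states this proposition without proof, as recalled background from the standard references, and your argument is precisely the standard one: orthogonality of characters on $\Z/p^n$ to write $\mathbf{1}_{a+p^n\Z_p}$ as $\frac{1}{p^n}\sum_{\zeta^{p^n}=1}\zeta^{x-a}$, followed by the identification $\int_{\Z_p}\zeta^x\,\mu(x)=\Scr{A}_\mu(\zeta)$, which is justified exactly as you indicate (the binomial series $\sum_{k}\binom{x}{k}(\zeta-1)^k$ converges uniformly in $x$ since $|\binom{x}{k}|_p\le 1$ and $|\zeta-1|_p<1$, so continuity of $\mu$ permits the term-by-term integration).
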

	
	Symmetrically, for $F(t)\in R[[t-1]]$ and all $n\in\Z_{\ge 0}$ and $a\in \Z_p$, we define 
	\begin{align}
		\label{equation.period}
		\Omega_F(a,n) = \frac{1}{p^n}\sum_{\zeta:\zeta^{p^n}=1}\zeta^{-a}F(\zeta),
	\end{align}
	
	which, through the Amice transform, equates $\mu_F(a+p^n\Z_p)$. We will often refer to $\Omega_F(a,n)$'s as \textit{periods} (attached to $F$ or $\mu_F$). In turn, we have
	\begin{align}
		\label{equation.riemann-power series}
		\int_{\Z_p} f(x)\mu_F(x) = \lim_{n\to \infty}\sum_{0\le a< p^n}f(a)\Omega_F(a,n).
	\end{align}
	
	We finish this section by introducing our protaganist, the rational function
	\begin{align}
		\label{equation.Lchi}
		L_\chi(t) = \sum_{1\le a< N} \chi(a)\frac{t^a}{t^N-1}.
	\end{align}
	
	Here, as set in §\ref{subsec.introduction-notation}, $\chi$ denotes a Dirichlet character of conductor $N>1$ that is prime to $p$. Denote further by $\Z_p[\chi]$ the ring obtained from $\Z_p$ by adjoining values of $\chi$. Since $N$ is prime to $p$, we see that $L_\chi(t)$, as an element in the fraction field of $\Z_p[\chi][[t-1]]$, is regular except potentially having a pole of order $1$ at $t=1$. The singularity is, however, superfluous, since
	\begin{align}
		\label{equation.power series Lchi}
		\begin{split}
			\sum_{1\le a<N}\chi(a)\frac{t^a}{t^N-1} 
			&= \sum_{1\le a<N}\chi(a)\frac{t^a-1}{t^N-1}+\frac{1}{t^N-1}\sum_{1\le a<N}\chi(a) \\
			&= \sum_{1\le a<N}\chi(a)\frac{t^a-1}{t^N-1}\\
			&= \sum_{1\le a<N}\chi(a)\frac{a+\frac{a(a-1)}{2}(t-1)+\cdots}{N + \frac{N(N-1)}{2}(t-1) + \cdots}.
		\end{split}
	\end{align}
	
	Thus $L_\chi(t)$ belongs to $\Z_p[\chi][[t-1]]$, and $L_\chi(1)=\sum_{0\le a<N}\chi(a)\frac{a}{N}$, which is further equal to $-L(0,\chi)$ \cite[Corollary 2.3.2]{Hida}. Finally we remind the reader of the role $L_\chi$ plays in the integral representation: let $\psi$ be any Dirichlet character of a $p$-power conductor, then
	\begin{align}
		\label{equation.integral-representation-dirichlet}
		-L_p(s,\chi\psi) = \int_{\Z_p^\times}\omega^{-1}\psi(x)\chx{x}^{-s} \mu_{L_\chi}(x).
	\end{align}
	
	As \eqref{equation.integral-representation-dirichlet} is not found in \cite{Hida}, we give it a very brief account. By a density argument, it suffices to prove the above integral of $\mu_{L_\chi}$ interpolates $L$-values for all $s=-k\in \Z_{\le 0}$. Also, we may work with $\psi\omega^{k+1}$ instead of $\psi$, where $\psi$ is of conductor $p^t$. Then we have (\textit{cf.}~\cite[p485]{Ka75}):
	\begin{align*}
		-\sum_{n\ge 1, p\nmid n} \chi\psi(n)n^k = -\sum_{n\ge 1, p\nmid n} \chi\psi(n)n^k t^n\Big|_{t=1} = \left(t\frac{d}{dt}\right)^k\Big\{\sum_{\substack{1\le a<Np^{\max\{t,1\}}\\ p\nmid a}}\frac{\chi\psi(a)t^a}{t^{Np^{\max\{t,1\}}}-1}\Big\}\Big|_{t=1},
	\end{align*}
	
	where the leftmost quantity is no other than $-(1-\chi\psi(p)p^k)L(-k,\chi\psi)$, and the rightmost quantity, by the Amice transform, is $\int_{\Z_p} x^k\mu_{[\psi|_{\Z_p^\times}]L_\chi}(x) = \int_{\Z_p^\times}x^k\psi(x)\mu_\chi(x)$ (see \cite[§3.5]{Hida}).
	
	\begin{rem}
		We will not need it, but the same argument establishes the identity
		\begin{align*}
			-L(-k,\chi) = \int_{\Z_p} x^k \mu_{L_\chi}(x).
		\end{align*}
		
		When $k=0$, this has been observed in the form of $L_\chi(1) = -L(0,\chi)$. 
	\end{rem}
	
	In what follows we shall often write $\mu_{L_\chi}$ simply as $\mu_\chi$.
	
	\section{Explicit Period Formulas and Sum Expressions}
	\label{sec.sumexpr}
	
	Invoking the machinery introduced in §\ref{subsec.introduction-method}, in this section we will establish the second part of Theorem \ref{thm.main} for any Dirichlet character $\chi$ of conductor $N>1$, where $\gcd(N,p)=1$. Note that ingredient (a) is provided by \eqref{equation.integral-representation-dirichlet}. As such, below we will start by establishing the explicit formulas of the periods $\Omega_{L_\chi}(m,n)=\mu_\chi(m+p^n\Z_p)$ attached to the power series $L_\chi$, or $\mu_\chi$, for all $n\in \Z_{\ge 0}$ and $m\in \Z_p$; this fulfils the requirement (b). We will see from the explicit formula \eqref{equation.period-formula} that (c) automatically follows.
	
	\vspace{3mm}
	
	Recall that $L_\chi(t) = \sum_{1\le a<N} \chi(a)\frac{t^a}{t^N-1}$ is an element in $\Z_p[\chi][[t-1]]$. In §\ref{sec.background} we have seen that
	\begin{align}
		\label{equation.Lchi(1)}
		L_\chi(1) = \sum_{1\le a<N}\chi(a)(a/N) = -L(0,\chi).
	\end{align}
	
	For $\zeta\ne 1$, we have the following
	\begin{lem}
		The following identity holds in $\bar{\Q}$, and thus in $\C_p$:
		\begin{align}
			\label{equation.Lchi-non1}
			\sum_{\zeta\ne 1:\zeta^{p^n}=1}L_\chi(\zeta)\zeta^{-m} = \sum_{1\le a< N}\chi(a)(\frac{m-a}{N})^\flat_{p^n}.
		\end{align}
	\end{lem}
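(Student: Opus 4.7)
The plan is to unwind the definition of $L_\chi$ from \eqref{equation.Lchi}, exploit the invertibility of $N$ modulo $p^n$ to reduce the inner sums to a classical root-of-unity sum, and then discard an irrelevant constant via the nontriviality of $\chi$.

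First I would write
\[
\sum_{\zeta\ne 1}L_\chi(\zeta)\zeta^{-m} = \sum_{1\le a<N}\chi(a)\sum_{\substack{\zeta^{p^n}=1\\ \zeta\ne 1}}\frac{\zeta^{a-m}}{\zeta^N-1}.
\]
Since $\gcd(N,p)=1$, the $N$-th power map permutes the $p^n$-th roots of unity and fixes $1$; choosing $N'\in\Z$ with $NN'\equiv 1\bmod p^n$ and substituting $\eta=\zeta^N$ rewrites each inner sum as $\sum_{\eta\ne 1}\eta^{(a-m)N'}/(\eta-1)$. The key technical input is then the auxiliary identity
\[
\sum_{\substack{\eta^{p^n}=1\\ \eta\ne 1}}\frac{\eta^k}{\eta-1} = \frac{p^n+1}{2} - k^\sharp_{p^n}\qquad (k\in\Z),
\]
which I would prove by splitting $\eta^k/(\eta-1) = 1+\eta+\cdots+\eta^{k^\sharp_{p^n}-1}+1/(\eta-1)$, applying the orthogonality $\sum_{\eta\ne 1}\eta^j=-1$ for $j\not\equiv 0\bmod p^n$ together with $\sum_{\eta\ne 1}1=p^n-1$, and invoking the standard value $\sum_{\eta\ne 1}1/(\eta-1) = -(p^n-1)/2$ (which falls out of the logarithmic derivative of $(X^{p^n}-1)/(X-1)$ at $X=1$).

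Applying the auxiliary identity with $k=(a-m)N'$ and noting the bookkeeping identity $((a-m)N')^\sharp_{p^n} = p^n - (\frac{m-a}{N})^\flat_{p^n}$, which is immediate from the definitions of $\sharp$ and $\flat$, each inner sum collapses to $(\frac{m-a}{N})^\flat_{p^n}-\frac{p^n-1}{2}$. Summing over $1\le a<N$ against $\chi(a)$, the constant $-(p^n-1)/2$ is annihilated by $\sum_{1\le a<N}\chi(a)=0$, which holds because $\chi$ is nontrivial of conductor $N>1$. This leaves precisely the claimed right-hand side. The only mild obstacle is keeping $\sharp$- and $\flat$-representatives straight and handling $k\equiv 0\bmod p^n$ uniformly in the auxiliary identity; once the identity is phrased in terms of $k^\sharp_{p^n}$, the remainder is purely formal, and since the whole computation takes place in $\bar{\Q}$, the identity passes to $\C_p$ for free through the fixed embedding.
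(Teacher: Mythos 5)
Your proof is correct, but it takes a genuinely different route from the paper's. The paper regularizes the sum by introducing a real parameter $x\in(0,1)$, replacing $\zeta^N-1$ by $x\zeta^N-1$, expanding $1/(1-x\zeta^N)$ as a geometric series, using orthogonality of $p^n$-th roots of unity to isolate the indices $k\equiv\frac{m-a}{N}\bmod p^n$, and finally letting $x\to 1^-$ in $\C$; the nontriviality of $\chi$ enters at the start, to subtract off the divergent $\frac{1}{x-1}\sum_a\chi(a)$ term. You instead stay entirely finite and algebraic: the substitution $\eta=\zeta^N$ (legitimate since $\gcd(N,p)=1$ makes the $N$-th power map a permutation of the $p^n$-th roots of unity fixing $1$) reduces everything to the classical closed form $\sum_{\eta\ne 1}\eta^k/(\eta-1)=\frac{p^n+1}{2}-k^\sharp_{p^n}$, which you verify correctly via the geometric-sum split and the logarithmic derivative of $(X^{p^n}-1)/(X-1)$ at $X=1$; the nontriviality of $\chi$ enters only at the end, to kill the constant $-(p^n-1)/2$. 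Your bookkeeping identity $((a-m)N')^\sharp_{p^n}=p^n-(\frac{m-a}{N})^\flat_{p^n}$ checks out, including the edge case $k\equiv 0\bmod p^n$, where $k^\sharp_{p^n}=p^n$ makes the auxiliary identity degenerate gracefully to $\sum_{\eta\ne1}1/(\eta-1)$. What your approach buys is the elimination of all archimedean limits --- the identity is proved by a finite computation valid in any field of characteristic zero containing the $p^n$-th roots of unity, so one could even work directly in $\C_p$ rather than descending from $\C$; what it costs is the need for the explicit evaluation of $\sum_{\eta\ne 1}1/(\eta-1)$, which the paper's Abel-summation device sidesteps by never isolating that singular term.
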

	
	\begin{proof}
		It suffices to prove this in $\C$. For this we introduce an auxillary parameter $x\in \R$ such that $0<x<1$, and observe the following archimedean limit:
		\begin{align*}
			\sum_{\zeta\ne 1:\zeta^{p^n}=1}L_\chi(\zeta)\zeta^{-m} = \lim_{x\to 1^-}\sum_{\zeta\ne 1:\zeta^{p^n}=1}\zeta^{-m}\sum_{1\le a<N}\chi(a)\frac{\zeta^a}{x\zeta^N-1}.
		\end{align*}
		
		Before taking the limit, the right hand side can be expanded as:
		\begin{align*}
			\sum_{\zeta\ne 1:\zeta^{p^n}=1}\sum_{1\le a<N}\chi(a)\frac{\zeta^a}{x\zeta^N-1}\zeta^{-m}
			&=\sum_{\zeta:\zeta^{p^n}=1}\sum_{1\le a<N}\chi(a)\frac{\zeta^a}{x\zeta^N-1}\zeta^{-m} - \frac{1}{x-1}\sum_{1\le a<N}\chi(a)\\
			&=-\sum_{\zeta:\zeta^{p^n}=1}\sum_{1\le a<N}\chi(a)\zeta^{a-m}\frac{1}{1-x\zeta^N}\\
			&=-\sum_{1\le a<N}\chi(a)\sum_{\zeta:\zeta^{p^n}=1}\zeta^{a-m}\sum_{k\ge 0}x^k\zeta^{Nk}\\
			&=-\sum_{1\le a<N}\chi(a)\sum_{k\ge 0}x^k\sum_{\zeta:\zeta^{p^n}=1}\zeta^{a-m+Nk}\\
			&=-p^n \sum_{1\le a<N}\chi(a)\sum_{\substack{k\ge 0\\ Nk\equiv m-a\bmod p^n}}
			x^k.
		\end{align*}
		
		As we have assumed that $\gcd(p,N)=1$, the above can be malleated further to
		\begin{align*}
			-p^n \sum_{1\le a<N}\chi(a)\sum_{\substack{k\ge 0\\ k\equiv \frac{m-a}{N}\bmod p^n}}
			x^k
			&=
			-p^n\sum_{1\le a<N}\chi(a)\frac{x^{(\frac{m-a}{N})^{\flat_{p^n}}}}{1-x^{p^n}}\\
			&=p^n\sum_{1\le a<N}\chi(a)\frac{1-x^{(\frac{m-a}{N})^\flat_{p^n}}}{1-x^{p^n}}\\
			&\to \sum_{1\le a<N}\chi(a)(\frac{m-a}{N})^\flat_{p^n} \quad(x\to 1^-).
		\end{align*}
	\end{proof}

	We can further analyze $(\frac{m-a}{N})^\flat_{p^n}$. Clearly, there is a unique $h(a,m) = h_n(a,m) \in \Z$ such that $(\frac{m-a}{N})^\flat = \frac{m-a+h(a,m)p^n}{N}$. So, we may re-structure the above sum as:
	\begin{align*}
		\sum_{1\le a<N}\chi(a)(\frac{m-a}{N})^\flat
		&= \sum_{1\le a<N}\chi(a)\frac{m-a+p^n h(a,m)}{N}\\
		&= \frac{m}{N}\sum_{1\le a<N}\chi(a) -\frac{1}{N}\sum_{1\le a<N}\chi(a)a +\frac{p^n}{N}\sum_{1\le a<N}\chi(a)h(a,m)\\
		&=L(0,\chi) + \frac{p^n}{N}\sum_{1\le a<N}\chi(a)h(a,m).
	\end{align*}
	
	Combinging \eqref{equation.Lchi(1)} and \eqref{equation.Lchi-non1}, we conclude that
	\begin{align}
		\label{equation.period-implicit}
		\Omega_\chi(m,n) = \frac{1}{p^n}\sum_{\zeta:\zeta^{p^n}=1}\zeta^mL_\chi(\zeta^{-1}) = \frac{1}{N}\sum_{1\le a<N}\chi(a)h(a,m).
	\end{align}
	
	\vspace{3mm}
	
	We now compute $h(a,m) = h_n(a,m)$. Recall that for $b\in \Z/N$, we use $b^\flat_N$ to denote the unique integer in $[0,N)$ such that $b^\flat_N \equiv b\bmod N$. 
	\begin{prop}
		\label{prop.h(a,m)}
		For $n\ge 0$, $0\le a<N$ and $0\le m<p^n$, we have $0\le h_n(a,m) <N$, whence $h_n(a,m) = (\frac{a-m}{p^n})^\flat_N$. In turn, for $n,a,m$ in the said range, the function $h_n(a,m)$ is periodic in $n$ with period $f$, the smallest positive integer such that $p^f \equiv 1\bmod N$, and is periodic in $m$ with period $N$. As a special case, if $p^n\equiv 1\bmod N$, then $h_n(a,m) = (a-m)^\flat_N$.
	\end{prop}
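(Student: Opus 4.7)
The plan is to reduce everything to proving the range statement $0 \le h_n(a,m) < N$; the closed-form expression and both periodicity assertions then follow immediately from elementary congruence arithmetic modulo $N$. To that end, I first unpack the defining relation $(\frac{m-a}{N})^\flat_{p^n} = \frac{m-a+h_n(a,m) p^n}{N}$ into two conditions on the integer $h = h_n(a,m)$: (i) $h p^n \equiv a-m \bmod N$, and (ii) $h p^n \in [a-m,\, Np^n+a-m)$. Since the interval in (ii) has length $Np^n$ while (i) carves out a single residue class mod $N$, these conditions together single out $h$ uniquely among all integers.

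The main step is then to exhibit some candidate $h' \in [0,N)$ satisfying both (i) and (ii); uniqueness will then force $h = h' \in [0,N)$. Because $\gcd(p,N)=1$, there is a unique $h' \in [0,N)$ with $h' p^n \equiv a-m \bmod N$, handling (i). To verify (ii), write $h' p^n - (a-m) = kN$ with $k \in \Z$: the hypotheses $0 \le a < N$ and $0 \le m < p^n$ give $h' p^n - (a-m) > 0 - N = -N$, forcing $k \ge 0$, so $h' p^n \ge a-m$; dually, $h' p^n \le (N-1) p^n < Np^n + (a-m)$ (using $m - a < p^n$, which holds since $m < p^n$ and $a \ge 0$), so $h' p^n < Np^n + a - m$. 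This bound-chasing is the only nontrivial point in the whole proof, and it is precisely where the hypotheses on $a$ and $m$ enter; I expect everything else to be a one-line deduction.

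Having established $h_n(a,m) \in [0,N)$ together with $h_n(a,m) p^n \equiv a - m \bmod N$, we immediately read off $h_n(a,m) = (\frac{a-m}{p^n})^\flat_N$. The periodicity $h_{n+f}(a,m) = h_n(a,m)$ then follows from $(p^{n+f})^{-1} \equiv (p^n)^{-1} \bmod N$, which is the defining property of $f$; the periodicity $h_n(a, m+N) = h_n(a,m)$ is immediate from $(a-m-N)/p^n \equiv (a-m)/p^n \bmod N$; and when $p^n \equiv 1 \bmod N$ the inverse becomes trivial, yielding $h_n(a,m) = (a-m)^\flat_N$ as claimed.
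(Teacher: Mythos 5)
Your proposal is correct and is essentially the paper's own argument in a slightly different packaging: the paper bounds $h_n(a,m)$ directly by $0\le h_n(a,m)<N$ using exactly the two inequalities $a-m<N$ and $m-a<p^n$ that you use, and then reads off the congruence $h_n(a,m)\equiv \frac{a-m}{p^n}\bmod N$, whereas you run the same bound-chasing on a candidate $h'\in[0,N)$ and conclude by uniqueness. The arithmetic content is identical, so there is nothing to add.
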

	
	\begin{proof}
		Since $(\frac{m-a}{N})^\flat = \frac{m-a+h(a,m)p^n}{N} <p^n$, $a< N$ and $m\ge0$, we have:
		\[\frac{-N+h(a,m)p^n}{N}< \frac{m-a+h(a,m)p^n}{N} \le p^n-1,\]
		
		which gives $h(a,m)<N$. On the other hand, because $(\frac{m-a}{N})^\flat \ge 0$, $a\ge 0$ and $m<p^n$, we have 
		\[\frac{p^n + h(a,m)p^n}{N}> \frac{m-a+h(a,m)p^n}{N}\ge 0.\]
		
		Therefore $h(a,m) > -1$, which implies $h(a,m)\ge 0$. Now, we can pin down the value of $h_n(a,m)$: By definition, $\frac{m-a+p^n h(a,m)}{N}$ is an integer, so $m-a+h(a,m)p^n \equiv 0 \bmod N$, from which we have $h(a,m) \equiv \frac{a-m}{p^n} \bmod N$. The inequality $0\le h(a,m) <N$ then forces $h(a,m)$ to be $(\frac{a-m}{p^n})^\flat_N$. The remaining statements are clear.
	\end{proof}
	
	With the value of $h(a,m)$ uncovered, we can now establish the explicit period formula:
	\begin{thm}
		\label{thm.explicit-period-formula}
		Let $\chi$ be a Dirichlet character of conductor $N>1$ that is prime to $p$, and let $\mu_\chi = \mu_{L_\chi}$ be the $p$-adic measure attached to $L_\chi$ \eqref{equation.Lchi}. Then for any $0\le m<p^n$, we have
		\begin{align}
			\label{equation.period-formula}
			\mu_\chi(m+p^n\Z_p) = \frac{1}{N}\sum_{1\le a<N}\chi(m+p^n a)a.
		\end{align}
		
		When $p^n\equiv 1\bmod N$, the above formula becomes
		\begin{align}
			\label{equation.period-special}
			\mu_\chi(m+p^n\Z_p) = -L(0,\chi) + \sum_{1\le a<m} \chi(a).
		\end{align}
	\end{thm}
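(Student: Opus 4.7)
The proof plan is to bootstrap from the implicit period formula \eqref{equation.period-implicit} together with the explicit value $h_n(a,m) = (\tfrac{a-m}{p^n})^\flat_N$ from Proposition \ref{prop.h(a,m)}, and then do two algebraic reindexings: one to obtain \eqref{equation.period-formula}, and a second, simpler one to specialize it to \eqref{equation.period-special}.

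For the general formula, I would substitute $b = h_n(a,m) = (\tfrac{a-m}{p^n})^\flat_N$ in
\[
\mu_\chi(m+p^n\Z_p) = \frac{1}{N}\sum_{0 \le a < N}\chi(a)\,h_n(a,m),
\]
(where I have extended the sum to include $a=0$ at the cost of nothing, since $\chi(0)=0$ as $N>1$). Because $\gcd(N,p)=1$, the map $a \mapsto b$ is a bijection of $\{0,1,\ldots,N-1\}$ with itself, whose defining congruence reads $a \equiv m+p^n b \bmod N$; since $\chi$ has conductor $N$, $\chi(a)=\chi(m+p^n b)$. The $b=0$ term contributes nothing, and the reindexed sum is $\sum_{1 \le b < N}\chi(m+p^n b)\,b$, which is \eqref{equation.period-formula}.

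For the specialization \eqref{equation.period-special}, under $p^n\equiv 1 \bmod N$, Proposition \ref{prop.h(a,m)} gives $h_n(a,m)=(a-m)^\flat_N$, which depends on $m$ only through $m^\flat_N$. The right-hand side $-L(0,\chi)+\sum_{1\le a<m}\chi(a)$ is likewise invariant under $m \mapsto m+N$, since $\chi$ is nontrivial (being of conductor $N>1$) and hence $\sum_{0<a<N}\chi(a)=0$. So I may assume $0 \le m < N$, in which case $(a-m)^\flat_N = (a-m) + N\cdot\mathbf{1}_{\{a<m\}}$ for $0\le a<N$, and splitting the sum gives
\[
\frac{1}{N}\sum_{1\le a<N}\chi(a)(a-m)^\flat_N = \frac{1}{N}\sum_{1\le a<N}\chi(a)a \;-\; \frac{m}{N}\sum_{1\le a<N}\chi(a) \;+\; \sum_{1\le a<m}\chi(a).
\]
The first term equals $-L(0,\chi)$ by \eqref{equation.Lchi(1)}, the middle vanishes by nontriviality of $\chi$, and the third is what we want.

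The main obstacle is essentially bookkeeping rather than any substantive difficulty: one must verify that the substitution $a \leftrightarrow b$ is a bijection respecting the character (immediate from $\gcd(N,p)=1$) and carefully track the role of $a=0$ versus $b=0$ boundary terms, which both vanish but for different reasons. All the analytic content lives upstream in the evaluation of $\sum_{\zeta \ne 1}L_\chi(\zeta)\zeta^{-m}$ and in Proposition \ref{prop.h(a,m)}; once these are in hand, the theorem is a purely combinatorial consequence.
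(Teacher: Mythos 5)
Your proposal is correct and follows essentially the same route as the paper: both start from \eqref{equation.period-implicit} together with $h_n(a,m)=(\tfrac{a-m}{p^n})^\flat_N$, reindex the sum by the affine bijection of $\Z/N$ induced by multiplication by $p^n$ (the paper does this as a shift $a\mapsto a+m$ followed by $a\mapsto ap^n$, you do it in one step via $b=h_n(a,m)$), and then prove \eqref{equation.period-special} by reducing to $0\le m<N$ using $N$-periodicity in $m$ and splitting $(a-m)^\flat_N=(a-m)+N\cdot\mathbf{1}_{\{a<m\}}$. The only differences are cosmetic bookkeeping around the $a=0$ and $b=0$ terms.
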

	
	\begin{proof}
		By \eqref{equation.period-implicit} and Proposition \ref{prop.h(a,m)}, we have $\mu_\chi(m+p^n\Z_p) = \Omega_\chi(m,n) = \frac{1}{N}\sum_{1\le a<N}\chi(a)(\frac{a-m}{p^n})^\flat_N$. Thus
		\begin{align*}
			\mu_\chi(m+p^n\Z_p) = \frac{1}{N}\sum_{1\le a< N} \chi(a+m)(\frac{a}{p^n})^\flat_N = \frac{1}{N}\sum_{1\le a< N} \chi(m+ a p^n) a.
		\end{align*}
		
		Now assume $p^n\equiv 1\bmod N$. To start, note that since $h(a,m)$ is periodic in $m$ with period $N$, so is $\Omega_\chi(m,n)$ by \eqref{equation.period-implicit}, as long as $0\le m<p^n$. As such, since the same periodicity in $m$ is also enjoyed by the right-hand side of \eqref{equation.period-special}, we only need to prove \eqref{equation.period-special} when $0\le m<N$. In this case:
		\begin{align*}
			\Omega_\chi(m,n)
			= \frac{1}{N}\sum_{1\le a<N}\chi(a)(a-m)^\flat_N
			= \frac{1}{N}\sum_{1\le a<N}\chi(a)(a-m) + \frac{1}{N}\sum_{1\le a<m}\chi(a)N = -L(0,\chi) + \sum_{1\le a<m}\chi(a).
		\end{align*}
	\end{proof}
	
	\begin{cor}
		Let $\chi$ be as in Theorem \ref{thm.explicit-period-formula}, and $\psi$ be a Dirichlet character of conductor a power of $p$. Let $q>1$ be the smallest power such that $q\equiv 1\bmod N$. Then
		\begin{align}
			\label{equation.sumexpr}
			\begin{split}
				-L_p(-s,\chi\psi\omega)
				&=\lim_{n\to \infty} \sum_{1\le m< q^n,p\nmid m} \left(\sum_{1\le a< m^\flat_N}\chi(a)\right) \psi(m)\chx{m}^s\\
				&=\lim_{n\to \infty} \sum_{1\le a<N}\chi(a)\sum_{\substack{1\le m<q^n\\ p\nmid m,m^\flat_N>a}}\psi(m)\chx{m}^s.
			\end{split}
		\end{align}
	\end{cor}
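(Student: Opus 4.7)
The plan is to assemble the three ingredients of the machinery outlined in \S\ref{subsec.introduction-method}, now that the key explicit period formula of Theorem \ref{thm.explicit-period-formula} is available. Starting from the integral representation \eqref{equation.integral-representation-dirichlet}, replacing $\psi$ by $\psi\omega$ and then $s$ by $-s$, I would obtain
\begin{align*}
-L_p(-s,\chi\psi\omega) = \int_{\Z_p^\times} \psi(x)\chx{x}^s\,\mu_\chi(x).
\end{align*}
Since the integrand is continuous on $\Z_p^\times$, the right-hand side equals a limit of Riemann sums over cosets $m+p^n\Z_p$ with $p\nmid m$. The critical choice is to restrict the limit to the subsequence $n=fk$, i.e., partitioning by $q^k\Z_p$, because then $q^k\equiv 1\bmod N$ and the simple form \eqref{equation.period-special} of the period formula applies. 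Using the orthogonality $\sum_{a\bmod N}\chi(a)=0$ to replace the sum up to $m$ with the sum up to $m^\flat_N$, this yields $\mu_\chi(m+q^k\Z_p) = -L(0,\chi) + \sum_{1\le a<m^\flat_N}\chi(a)$.

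Substituting splits the Riemann sum into two pieces: one is exactly the desired expression appearing in the statement, and the other is the residual
\begin{align*}
-L(0,\chi)\,\lim_{k\to\infty}\sum_{\substack{1\le m<q^k\\ p\nmid m}}\psi(m)\chx{m}^s,
\end{align*}
which I need to show vanishes. This is the same vanishing remark invoked at the end of \S\ref{subsec.introduction-method}. The cleanest proof is to extend $\psi\chx{}^s$ to a continuous $\C_p$-valued function $\tilde g$ on $\Z_p$ by setting it equal to $0$ on $p\Z_p$ (possible since $\Z_p^\times$ is clopen), then Mahler-expand $\tilde g=\sum_k a_k\binom{\cdot}{k}$ with $a_k\to 0$, so that
\begin{align*}
\sum_{\substack{1\le m<p^n\\ p\nmid m}}\psi(m)\chx{m}^s = \sum_{0\le m<p^n}\tilde g(m) = \sum_k a_k\binom{p^n}{k+1}.
\end{align*}
Since $v_p\!\left(\binom{p^n}{k+1}\right)=n-v_p(k+1)$ for $n$ large, each term tends to $0$ as $n\to\infty$, and the tail control from $a_k\to 0$ gives uniform convergence to $0$.

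Once the residual is dispatched, the first claimed equality is proven; the second follows by simply exchanging the inner and outer sums, which recasts the double sum as $\sum_{1\le a<N}\chi(a)$ times a partial sum over those $m$ with $m^\flat_N>a$. The only step with any content is the subsequence argument: without specializing $n$ to multiples of $f$ one would have to work with the more intricate \eqref{equation.period-formula} rather than \eqref{equation.period-special}; the vanishing lemma and the index swap are both routine.
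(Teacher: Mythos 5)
Your proposal is correct and takes essentially the same route as the paper: Riemann sums of the integral representation over the cosets of $q^n\Z_p$ (so that the special-case period formula \eqref{equation.period-special} applies), the observation that $\sum_{1\le a<m}\chi(a)=\sum_{1\le a<m^\flat_N}\chi(a)$ for nontrivial $\chi$, and the vanishing of $\lim_{n\to\infty}\sum_{1\le m<q^n,\,p\nmid m}\psi(m)\chx{m}^s$. The only difference is cosmetic: you prove that vanishing lemma from scratch via the Mahler expansion and the identity $\sum_{0\le m<p^n}\binom{m}{k}=\binom{p^n}{k+1}$, whereas the paper simply cites it as a standard fact of $p$-adic analysis.
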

	
	\begin{proof}
		This follows from taking the Riemann sums of \eqref{equation.integral-representation-dirichlet} with respect to the cosets of $q^n\Z_p$ for all $n\ge 0$, and the two simple facts below; the second is standard in $p$-adic analysis (see \cite[§34]{Schi}).
		\begin{enumerate}
			\item $\sum_{1\le a<m}\chi(a) = \sum_{1\le a<m^\flat_N} \chi(a)$ since $\chi$ is nontrivial;
			
			\item $\lim_{n\to \infty} \sum_{1\le m< q^n, p\nmid m} \psi(m)\chx{m}^s = 0$, since the function $\psi(m)\chx{m}^s$ is continuous on $\Z_p^\times$, and thus on $\Z_p$ by extending by zero.
		\end{enumerate}
	\end{proof}
	
	\begin{rem}
		We may change $\flat$ to $\sharp$ and still get an equality, since $\sum_{1\le a< N} \chi(a) =0$.
	\end{rem}
	
	\begin{rem}
		The limit sign of the second equality of \eqref{equation.sumexpr} can be taken inside. Namely,
		\[\lim_{n\to \infty} \sum_{\substack{1\le m<q^n\\ p\nmid m,m^\flat_N>a}} \psi(m)\chx{m}^s\]
		
		exists. This is discussed in Appendix \ref{appx.convergence}.
	\end{rem}
	
	\begin{rem}
		With some algebraic manipulation, one can show that the 2-regularized sum expression given by Theorem 2.4 of \cite{De09} follows from the second period formula \eqref{equation.period-special}, by deriving the following 2-regularized version ($p$ is odd and $f'$ is the order of $p$ in $(\Z/2N)^\times$):
		\begin{align}
			\label{equation.delbourgo-dirichlet}
			\Omega_{L_{\chi,2}}(m,f'n) = L(0,\chi) + \sum_{1\le a<m} \chi(a) - 2\sum_{1\le a<m/2} \chi(a),
		\end{align}
		
		where $L_{\chi,2}(t) = L_\chi(t) - 2L_\chi(t^2)$ is the rational function used in \cite{De09}.
	\end{rem}
	
	\begin{rem}
		While we have only focused on the case of $p^n\equiv 1$ in the sum expression \eqref{equation.sumexpr}, there are also sum expressions for other residues in $(\Z/N)^\times$. For instance, when $p \equiv 2\bmod 3$ and $\chi$ is the quadratic character of conductor $3$, we have
		\begin{align*}
			L_p(-s,\chi\psi\omega) = \lim_{n\to \infty} \sum_{\substack{1\le m<p^{2n+1}, p\nmid m\\ m\equiv 1\bmod 3}} \psi(m)\chx{m}^s.
		\end{align*}
	\end{rem}
	
	\begin{rem}
		Since $\mu_\chi$ is valued in $\Z_p$ and $s\in \Z_p$, the $n$-th partial sum of \eqref{equation.sumexpr}, being the Riemann sum of division by $q^n\Z_p$, is equal to $-L_p(-s,\chi\psi\omega)$ modulo $q^n$. In practice, the computational complexity of the sum expression can be cut by half by noting that $\chi\psi\omega$ is even, and that $\chx{m}^s\equiv \chx{p^n - m}^s\bmod p^n$.
	\end{rem}
	
	\begin{rem}
		\label{remark.leopoldt}
		Using Leopoldt's formula \cite[p61, Theorem 3]{Iw}, one can deduce the following analogous formulas by letting $s=1$:
		\begin{align*}
			\label{}
			-(1-\frac{1}{p})\log_p N = \lim_{n\to \infty} \sum_{1\le m< q^n, p\nmid m} \frac{m^\sharp_N}{m}
		\end{align*}
		
		\begin{align*}
			\log_p\left(\frac{1-\zeta_{p^h}^b}{1-\zeta_{p^h}^{bN}}\right) = \lim_{n\to \infty} \sum_{1\le m< q^n, p\nmid m} \frac{\zeta_{p^h}^{bm} m^\sharp_N}{m}\quad \text{ for }h\ge 1, b\in (\Z/p^h)^\times
		\end{align*}
		
		\begin{align*}
			\log_p\left(\frac{1-\zeta_N^c}{(1-\zeta_N^{cp})^{1/p}}\right) = \lim_{n\to \infty} \sum_{1\le m< q^n, p\nmid m} \frac{\sum_{1\le a<m^\flat_N} \zeta_N^{ca}}{m}\quad \text{ for } c\in (\Z/N)^\times
		\end{align*}
		
		\begin{align*}
			\log_p(1-\zeta_{p^h}^b\zeta_N^c) = \lim_{n\to \infty} \sum_{1\le m< q^n, p\nmid m} \frac{\zeta_{p^h}^{bm}\sum_{1\le a<m^\flat_N}\zeta_N^{ca}}{m}\quad \text{ for }h\ge 1, b\in(\Z/p^h)^\times,c\in(\Z/N)^\times.
		\end{align*}
		
		When $p$ is odd and $N=2$, starting from the first formula, we can easily obtain the following variant
		\begin{align*}
			-2(1-\frac{1}{p})\log_p 2 = \lim_{n\to \infty} \sum_{1\le m<\frac{p^n}{2},p\nmid m} \frac{1}{m}.
		\end{align*}
		
		An interesting question is the following: given $n$, what is the largest $d(n)\in \Z_{\ge 0}$ such that
		\begin{align*}
			-2(1-\frac{1}{p})\log_p 2 \equiv \sum_{1\le m<\frac{p^n}{2},p\nmid m} \frac{1}{m} \pmod{p^{d(n)}}?
		\end{align*} 
		
		While it is automatic from the Riemann sum perspective that $d(n)\ge n$, from some small numerical experiments we find that $d(n)\ge 2n-1$. Can one show that there exists a constant $C>0$, such that for all $p>2$ and $n\ge 1$, $d(n)\ge 2n-C$?
	\end{rem}
	
	\begin{rem}
		\label{rem.Iwasawa}
		Finally we remark on an important application of explicit period formulas on Iwasawa invariants. Basically, for any $p$-adic measure $\mu$ attached to some Kubota-Leopoldt $p$-adic $L$-function, by choosing a topological generator $\kappa$ of $1+p\Z_p$, we can form a power series in $R[[t-1]]$ via the integral $\int_{\Z_p^\times} t^{-\frac{\log_p x}{\log_p \kappa}} \mu(x) $, which in turn is the Iwasawa power series of the corresponding $p$-adic $L$-function. If we take the $p^n\Z_p$-Riemann sum of the integral, we then get a well-defined element in $R[t-1]/(t^{p^n}-1)$, whose coefficients encode the information of Iwasawa $\mu$ and $\lambda$ invariants modulo $p^n$. This observation is classically exploited by \cite{Fe78} and \cite{FW79}, where the coefficients are obtained from Stickelberger elements. For the link between Stickelberger elements and sum expressions we refer the reader to Appendix \ref{appx.stickelberger}.
	\end{rem}
	
	\section{Application: Derivative Formulas at $s=0$}
	\label{sec.application}
	
	As promised we now reprove the Ferrero-Greenberg formula using the sum expression for $p$-adic Dirichlet $L$-functions, based on an eponymous trick recalled in Appendix \ref{appx.FG}. Even better, our approach also leads to formulas of higher derivatives expressed in terms of certain elementary functions. For the original and other existing proofs, see \cite{FG78,Ko79,Wa81}
	
	\vspace{3mm}
	
	As is customary, let $\chi$ be a non-trivial Dirichlet character of conductor $N$ that is prime to $p$, and let $\psi$ be one of a $p$-power conductor with $\chi\psi(-1)=-1$ so that $L_p(s,\chi\psi\omega)$ is not identically zero. For simplicity in this section we will drop the subscript from the notation $m^{\flat/\sharp}_N$. Changing $\flat$ to $\sharp$ in \eqref{equation.sumexpr}, we have
	\begin{align}
		\label{equation.taylor-pre}
		\begin{split}
			-L_p(-s,\chi\psi\omega) 
			&= \lim_{n\to \infty}\sum_{1\le a<N}\chi(a)
			\sum_{\substack{1\le m< q^n\\ p\nmid m, m^\sharp > a}}\psi(m)\chx{m}^s\\
			&= \lim_{n\to \infty}\sum_{1\le a<N}\chi(a)
			\sum_{\substack{1\le m<q^n\\ p\nmid m,m^\sharp >a}}\psi(m)
			\sum_{k\ge 0} (\log_{p} m)^k\frac{s^k}{k!}\\
			&
			= \lim_{n\to \infty}\sum_{1\le a<N}\chi(a)\sum_{k\ge 0}\frac{s^k}{k!}\sum_{\substack{1\le m<q^n\\ p\nmid m,m^\sharp >a}}\psi(m)(\log_{p} m)^k.
		\end{split}
	\end{align}
	
	We now show $\lim_{n\to \infty}\sum_{\substack{1\le m<q^n\\ p\nmid m,m^\sharp >a}} \psi(m)(\log_p m)^k$ exists for all $k\ge 0$, so that we may take the limit inside and obtain a Taylor expansion. To achieve this, we invoke the Ferrero-Greenberg permutation (see Appendix \ref{appx.FG}), supposing $q^n$ is larger than the conductor of $\psi$:
	\begin{align*}
		\sum_{\substack{1\le m<q^n\\ p\nmid m,m^\sharp >a}}\psi(m)(\log_{p} m)^k
		&\equiv \sum_{\substack{1\le m<q^n\\ p\nmid m,m^\sharp >a}}
		\psi(\iota(m)N)[\log_p\iota(m) + \log_p N]^k \pmod{q^n}\\
		&=\sum_{\substack{1\le m<\frac{N-a}{N}q^n\\ p\nmid m}} \psi(m)\psi(N)[\log_p m+\log_p N]^k \\
		& = \psi(N)\sum_{0\le i\le k}\binom{k}{i}(\log_{p} N)^{k-i}\sum_{\substack{1\le m<\frac{N-a}{N}q^n\\ p\nmid m}} \psi(m)(\log_{p} m)^i.
	\end{align*}
	
	As such, we conclude that $\lim_{n\to \infty} \sum_{\substack{1\le m<q^n\\ p\nmid m,m^\sharp >a}} \psi(m)(\log_{p} m)^k$ exists from the existence of
	\[
	\lim_{n\to \infty} \sum_{\substack{1\le m<\frac{N-a}{N}q^n\\ p\nmid m}} \psi(m)(\log_p m)^i,
	\]
	
	which in turn is guaranteed by Example \ref{eg.proportioanl-sum}. Indeed, as we have seen in that example, the above limit is exactly $\Lambda_{\psi\log_p^i}(\frac{a}{N})$, whereby
	\begin{align}
		\label{equation.coefficient}
		\begin{split}
			\lim_{n\to \infty}\sum_{\substack{1\le m<q^n\\ p\nmid m,m^\sharp >a}} \psi(m)(\log_{p} m)^k = \psi(N)\sum_{0\le i\le k}\binom{k}{i}(\log_{p} N)^{k-i} \Lambda_{\psi\log_p^i}(\frac{a}{N}).
		\end{split}
	\end{align}
	
	Combining \eqref{equation.taylor-pre} and \eqref{equation.coefficient} we then have:
	
	\begin{thm}
		Let $\chi$ be a Dirichlet character of conductor $N>1$ that is prime to $p$, and $\psi$ be one of a $p$-power conductor. Then we have
		\begin{align}
			\label{equation.taylor}
			-L_p(-s,\chi\psi\omega) = \psi(N) \sum_{k\ge 0}\frac{s^k}{k!}
			\sum_{1\le a<N}\chi(a) \left[\sum_{0\le i\le k}\binom{k}{i}(\log_p N)^{k-i}\Lambda_{\psi\log_p^i}(\frac{a}{N})\right].
		\end{align}
	\end{thm}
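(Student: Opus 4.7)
The plan is to take the sum expression \eqref{equation.sumexpr} as the starting point, replace $\flat$ by $\sharp$ (permitted by the remark following it), expand $\chx{m}^s$ as a Taylor series in $s$, and interchange sums to extract the coefficient of each $s^k/k!$. Since $\chx{m}^s = \exp(s\log_p m)$ has Taylor coefficients $(\log_p m)^k/k!$, at each finite $n$ the inner sum in \eqref{equation.sumexpr} rearranges as
\begin{align*}
\sum_{\substack{1\le m<q^n\\ p\nmid m,\ m^\sharp>a}}\psi(m)\chx{m}^s = \sum_{k\ge 0}\frac{s^k}{k!}\sum_{\substack{1\le m<q^n\\ p\nmid m,\ m^\sharp>a}}\psi(m)(\log_p m)^k.
\end{align*}
After passing to the $n$-limit, the theorem reduces to two tasks: evaluating each inner limit in closed form, and commuting that limit past the $k$-summation.

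For the first task, I would apply the Ferrero-Greenberg permutation $\iota$ recalled in Appendix \ref{appx.FG}. Modulo $q^n$, the correspondence $m\leftrightarrow\iota(m)N$ bijects the index set $\{1\le m<q^n:p\nmid m,\,m^\sharp>a\}$ with $\{1\le m'<(N-a)q^n/N:p\nmid m'\}$, while $\psi(m)$ transforms into $\psi(N)\psi(m')$ and $\log_p m$ into $\log_p m'+\log_p N$. Expanding $(\log_p m'+\log_p N)^k$ binomially expresses the inner sum (mod $q^n$) as a $\Q_p$-linear combination of the partial sums $\sum_{1\le m'<(N-a)q^n/N,\,p\nmid m'}\psi(m')(\log_p m')^i$, each of which converges by Example \ref{eg.proportioanl-sum} to the proportional sum $\Lambda_{\psi\log_p^i}(a/N)$. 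Collecting terms and multiplying by $\chi(a)$ reproduces exactly the bracketed expression in \eqref{equation.taylor}.

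The main obstacle is justifying the interchange of $\lim_{n\to\infty}$ with the $k$-series. This rests on a uniformity estimate: since $\log_p m\in p\Z_p$ for $m\in\Z_p^\times$ (resp.\ $4\Z_2$ when $p=2$), the coefficients $(\log_p m)^k/k!$ have $p$-adic valuation tending to $+\infty$ in $k$ uniformly in $m$, so the tail of the $k$-series beyond any cutoff is bounded modulo $p^M$ independently of $n$. Combined with the observation that the $n$-th Riemann partial sum in \eqref{equation.sumexpr} agrees with $-L_p(-s,\chi\psi\omega)$ modulo $q^n$, this legitimizes the swap and, together with the evaluation of the inner limits, produces \eqref{equation.taylor}.
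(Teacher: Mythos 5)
Your proposal is correct and follows essentially the same route as the paper: it starts from \eqref{equation.sumexpr} with $\flat$ changed to $\sharp$, Taylor-expands $\chx{m}^s$, and uses the Ferrero--Greenberg permutation to identify each coefficient limit with $\psi(N)\sum_{0\le i\le k}\binom{k}{i}(\log_p N)^{k-i}\Lambda_{\psi\log_p^i}(\frac{a}{N})$ via Example \ref{eg.proportioanl-sum}. Your explicit uniformity estimate for interchanging $\lim_{n\to\infty}$ with the $k$-series is in fact slightly more careful than the paper's treatment, which only records the existence of the individual limits.
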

	
	Next we compare our computation with the Ferrero-Greenberg formula: Let $\psi=\1$ be the trivial character and $\chi$ be odd, then for $k=1$:
	\begin{align*}
		\begin{split}
			\sum_{0\le i\le 1} \binom{1}{i}(\log_p N)^{1-i}\Lambda_{\log_p^i}(\frac{a}{N})
			&= \Lambda_{1}(\frac{a}{N})\log_{p} N + \Lambda_{\log_p}(\frac{a}{N})\\
			&= \left[\frac{a}{N} - 1 -  V(\frac{a}{N}-1)\right]\log_{p} N + \log_p\Gamma_p(\frac{a}{N}),
		\end{split}
	\end{align*}
	
	where the last equality follows from our discussions in Example \ref{eg.verschiebung} and \ref{eg.Gamma}. By \eqref{equation.taylor} we thus have
	\begin{align}
		\label{equation.pre-derivative}
		\begin{split}
			L'_p(0,\chi\omega) &= \sum_{1\le a<N}\chi(a)\left[\log_{p} N(\frac{a}{N} - 1 - V(\frac{a}{N}-1)) + \log_{p}\Gamma_p(\frac{a}{N})\right]\\
			&= \sum_{1\le a<N}\chi(a)\log_{p}\Gamma_p(\frac{a}{N}) + \log_{p} N\sum_{1\le a<N}\chi(a)\frac{a}{N} - \log_{p} N\sum_{1\le a<N}\chi(a)V(\frac{a}{N}-1)\\
			&= \sum_{1\le a< N}\chi(a)\log_p\Gamma_p(\frac{a}{N}) - L(0,\chi)\log_p N - \log_p N\sum_{1\le a< N}\chi(a)V(\frac{a}{N}-1).
		\end{split}
	\end{align}
	
	Combining \eqref{equation.pre-derivative} with the lemma below, we have recovered the following formula of Ferrero-Greenberg \cite[Proposition 1]{FG78}:
	\begin{align}
		L'_p(0,\chi\omega) = \sum_{1\le a< N}\chi(a)\log_p\Gamma_p(\frac{a}{N}) -(1-\chi(p))L(0,\chi)\log_p N.
	\end{align}
	
	\begin{lem}
		If $\chi$ is odd, then
		\begin{align}
			\label{equation.verschiebung}
			\sum_{1\le a<N}\chi(a) V(\frac{a}{N}-1) = \chi(p) \sum_{1\le a<N}\chi(a)\frac{a}{N}.
		\end{align}
	\end{lem}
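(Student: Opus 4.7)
The plan is to compute both sides explicitly via the formula for the Verschiebung given in Example \ref{eg.verschiebung}, and then apply a change of variables on $(\Z/N)^\times$. The operator $V$ acts on fractions of the form $\tfrac{a-N}{N}$ with $\gcd(N,p)=1$ by the rule
\[
V\!\left(\tfrac{a}{N}-1\right)\;=\;\tfrac{(p^{-1}a)\bmod N}{N}-1,
\]
where $p^{-1}$ is the inverse of $p$ modulo $N$ and the residue is taken in $\{1,\dots,N-1\}$. This explicit formula is the content of Example \ref{eg.verschiebung}; one can in fact extract it directly from the identity $\Lambda_1(\tfrac{a}{N})=\tfrac{a}{N}-1-V(\tfrac{a}{N}-1)$ together with the $p$-adic evaluation of the proportional sum defining $\Lambda_1(\tfrac{a}{N})$ from Example \ref{eg.proportioanl-sum}, which yields a closed form of the shape $-(N-a+sN)/(Np)$ with $s\equiv(a-N)/N\bmod p$ in $\{0,\dots,p-1\}$.

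Granting this formula, substitute into the left-hand side. The constant term $-1$ contributes $-\sum_{1\le a<N}\chi(a)=0$, since $\chi$ is nontrivial, leaving
\[
\sum_{1\le a<N}\chi(a)\,V\!\left(\tfrac{a}{N}-1\right)\;=\;\tfrac{1}{N}\sum_{1\le a<N}\chi(a)\cdot((p^{-1}a)\bmod N).
\]

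The proof concludes with reindexing: the map $a\mapsto b=p^{-1}a\bmod N$ is a bijection on $(\Z/N)^\times$, and $\chi(a)=\chi(pb)=\chi(p)\chi(b)$. Hence the previous display becomes $\chi(p)\tfrac{1}{N}\sum_{1\le b<N}\chi(b)\cdot b$, which is exactly the right-hand side of the lemma. It is worth noting that the oddness of $\chi$ is never actually used in this argument; only nontriviality enters (to kill the constant piece). The only substantive step is pinning down the explicit formula for $V(\tfrac{a}{N}-1)$; once this is in hand, the identity reduces to a one-line change of variables on $(\Z/N)^\times$.
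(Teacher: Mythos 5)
Your proof is correct and essentially the paper's: both start from the explicit evaluation $V(\tfrac{a}{N}-1)=\tfrac{1}{p}\bigl[\tfrac{a}{N}-1-(\tfrac{a}{N}-1)^\flat_p\bigr]$, identify the result as a normalized residue of $a/p$ modulo $N$, and conclude by the change of variables $a\mapsto pb$ on $(\Z/N)^\times$. The only (cosmetic) difference is your choice of representative, $\tfrac{(p^{-1}a)\bmod N}{N}-1$ in place of the paper's $-\tfrac{(-a/p)^\flat_N}{N}$, which shifts the burden from $\chi(-1)=-1$ to $\sum_{a}\chi(a)=0$; your observation that oddness is dispensable is accurate but unsurprising, since for even nontrivial $\chi$ both sides of \eqref{equation.verschiebung} vanish.
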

	
	\begin{proof}
		First we see that $V(\frac{a}{N}-1) =\frac{1}{p}\left[\frac{a}{N}-1 -\left(\frac{a}{N}-1\right)^\flat_p\right]$. Write $(\frac{a}{N}-1)^\flat_p = \frac{-(N-a)+p h(0,N-a)}{N}$ as in §\ref{sec.sumexpr}. Then Proposition \ref{prop.h(a,m)} shows that $h(0,N-a) = (-a/p)^\flat_N$. Thus
		\begin{align*}
			\sum_{1\le a<N}\chi(a) V(\frac{a}{N}-1)
			&= -\sum_{a\in (\Z/N)^\times} \chi(a) \frac{(-a/p)^\flat_N}{N}\\
			&= -\sum_{1\le a<N} \chi(-ap) \frac{a}{N}\\
			&= \chi(p) \sum_{1\le a<N}\chi(a)\frac{a}{N}.
		\end{align*}
	\end{proof}

	\appendix

	\section{Ferrero-Greenberg permutation}
	\label{appx.FG}
	
	Notation as set in §\ref{subsec.introduction-notation}. Let $M_n$ be the set $\{1\le m< q^n: p\nmid m\}$, which has two filtrations:
	\begin{itemize}
		\item $M_n = \Phi^n_0 \supsetneq \Phi^n_1\supsetneq\cdots \Phi^n_{N-1} \supsetneq \Phi^n_N = \emptyset$, where for $0\le a\le N$, $\Phi^n_a = \{1\le m< q^n: p\nmid m, m^\sharp_N > a\}$.
		
		\item $M_n = \Psi^n_0 \supsetneq \Psi^n_1 \supsetneq\cdots \Psi^n_{N-1} \supsetneq \Psi^n_N = \emptyset$, where for $0\le a\le N$, $\Psi^n_a = \{1\le m < \frac{N-a}{N}q^n: p\nmid m\}$.
	\end{itemize}
	
	Following \cite{FG78} we define the map $\iota: M_n \to M_n$,
	\[\text{for }m = m^\sharp_N + h N,\ \iota(m) = h+1 + (N-m^\sharp_N)\frac{q^n-1}{N}.\]
	
	We record here the standard properties of $\iota$.
	\begin{prop}
		The Ferrero-Greenberg map $\iota$ is a well-defined permutation on the finite set $M_n$. More precisely, we have:
		\begin{enumerate}
			\item[(i)] If $m\in M_n$, then $\frac{q^n-1}{N} \cdot m \equiv -\iota(m) \bmod q^n$, i.e., $m\equiv N\iota(m) \bmod q^n$.
			
			\item[(ii)] For all $0\le a\le N$, $\iota$ induces a bijection between $\Phi^n_a$ and $\Psi^n_a$. 
		\end{enumerate}
	\end{prop}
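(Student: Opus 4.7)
My plan is to reduce both parts to a single observation: the map $\iota$ implements multiplication by $N^{-1}$ modulo $q^n$, packaged so that the output lands canonically in $\{1,\dots,q^n-1\}$ without any further reduction. Once this is made precise, everything else is bookkeeping with inequalities.

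First I would verify by direct expansion, using $Nh = m - m^\sharp_N$, the key identity
\[
N\iota(m) = N(h+1) + (N - m^\sharp_N)(q^n - 1) = m + (N - m^\sharp_N)\, q^n.
\]
Part (i) is then immediate: reducing modulo $q^n$ gives $N\iota(m) \equiv m$, and multiplying by $(q^n - 1)/N$ (an integer since $q\equiv 1\bmod N$) together with $q^n - 1 \equiv -1 \bmod q^n$ yields the first congruence in (i).

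Next I would check that $\iota$ maps $M_n$ into $M_n$ and is a bijection. From the ranges $m^\sharp_N \in \{1,\dots,N\}$ and $h \in \{0,\dots,(q^n-1)/N - 1\}$, a short arithmetic on the defining formula gives $1 \leq \iota(m) \leq q^n - 1$; the coprimality $p \nmid \iota(m)$ follows from $N\iota(m) \equiv m \bmod q^n$ together with $\gcd(pN, q^n) = 1$ and $p \nmid m$. Since $N$ is invertible modulo $q^n$, the rule $m \mapsto m'$ defined by $Nm' \equiv m \bmod q^n$ with $m' \in \{1,\dots,q^n-1\}$ is a self-bijection of $M_n$; by part (i), $\iota$ coincides with this rule, hence is bijective.

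For (ii), I would rewrite the key identity as $\iota(m) = (m + (N - m^\sharp_N)\, q^n)/N$ and observe
\[
\iota(m) < \tfrac{N-a}{N}\, q^n \iff m < (m^\sharp_N - a)\, q^n.
\]
Since $0 < m < q^n$ while $m^\sharp_N$ and $a$ are integers, the right-hand inequality holds precisely when $m^\sharp_N \geq a + 1$, i.e., $m^\sharp_N > a$. Thus $m \in \Phi^n_a \Leftrightarrow \iota(m) \in \Psi^n_a$, and the bijectivity established above restricts to a bijection $\Phi^n_a \xrightarrow{\sim} \Psi^n_a$ for every $0 \leq a \leq N$. The main obstacle is really conceptual rather than technical: the formula for $\iota$ looks ad hoc, but it is crafted exactly so that $N \cdot \iota(m) - m$ is a non-negative multiple of $q^n$ bounded by $(N-1)q^n$; recognizing this pattern is the one creative step, after which all assertions reduce to comparing integers.
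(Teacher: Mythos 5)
Your argument is correct. The paper itself does not prove this proposition---it simply cites the proof of Lemma~1 of Ferrero--Greenberg---so you have supplied a self-contained verification. Your single identity $N\iota(m)=m+(N-m^\sharp_N)q^n$ (which indeed follows from $Nh=m-m^\sharp_N$) does all the work: it gives (i) upon reduction mod $q^n$, gives $1\le\iota(m)\le q^n-1$ and $p\nmid\iota(m)$ by elementary bounds and reduction mod $p$, identifies $\iota$ with multiplication by $N^{-1}$ on $(\Z/q^n)^\times$ (hence a bijection of $M_n$), and reduces (ii) to the equivalence $\iota(m)<\frac{N-a}{N}q^n\iff m<(m^\sharp_N-a)q^n\iff m^\sharp_N>a$, which is exactly right since $0<m<q^n$. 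This is essentially the same mechanism as in the cited source, organized more cleanly around the one congruence; no gaps.
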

	
	\begin{proof}
		See the proof of \cite[Lemma 1]{FG78}.
	\end{proof}

	Here is an illutration of how $\iota$ looks like for $p=5$ and $N=3$:
	We may arrange $M_2=\{1\le m<25 : 5\nmid m\}$ in the table
	\[
	\begin{array}{|*{8}{c|}}
		\hline
		1 & 4 & 7 &  & 13 & 16 & 19 & 22\\
		\hline
		2 && 8& 11& 14& 17& & 23\\
		\hline
		3& 6& 9& 12& & 18& 21& 24\\ \hline
	\end{array}
	\]
	
	which under $\iota$ is mapped to
	\[
	\begin{array}{|*{8}{c|}}
		\hline
		17 & 18 & 19& & 21 & 22 & 23 & 24\\
		\hline
		9 & & 11& 12& 13& 14& & 16\\
		\hline
		1& 2& 3& 4& & 6& 7& 8\\ \hline
	\end{array}
	\]

	\section{Convergence of Sum Expressions}
	\label{appx.convergence}
	
	It may appear intriguing that the limits that show up in sum expressions exist. Our goal here is to give an elementary proof to their convergence. More specifically, in both sum expressions stated in Theorem \ref{thm.main}, by grouping terms according to their residues modulo $N$, it suffices to prove for any $0\le a\le N$ the existence of the following
	\begin{align*}
		\lim_{n\to \infty} \sum_{\substack{1\le m<q^n\\ p\nmid m,m^\sharp >a}} \psi(m)\chx{m}^s.
	\end{align*}
	
	By using Ferrero-Greenberg permutation in the style of §\ref{sec.application}, it reduces to prove the existence of 
	\begin{align}
		\label{equation.limit-proportional}
		\lim_{n\to \infty} \sum_{\substack{1\le m<\frac{N-a}{N}q^n\\ p\nmid m}} \psi(m)\chx{m}^s.
	\end{align}
	
	To show this, we invoke some standard fact of $p$-adic analysis \cite{Schi}. Let $f$ be a continuous function on $\Z_p$ valued in $\C_p$. Then we may uniquely solve the difference equation (\textit{ibid.}, Theorem 34.1)
	\begin{align*}
		F(x+1)-F(x) = f(x),
	\end{align*}
	
	so that $F$ is also continuous on $\Z_p$ and $F(0)=0$. Now, if $f$ is only a continuous function on $\Z_p^\times$, we may extend $f$ on $\Z_p$ by zero, and denote the resulting continuous function by $f_!$. As such we let $\Lambda_f$ be the unique solution to the difference equation of $f_!$. In this way, the existence of \eqref{equation.limit-proportional} follows from Example \ref{eg.proportioanl-sum} below.
	
	\begin{eg}
		\label{eg.verschiebung}
		Let $f(x) = 1$ be the constant function. Then for all $n\in \Z_{>0}$, $\Lambda_f(n) = \sum_{1\le m< n, p\nmid m} 1 = n-1 - \lfloor\frac{n-1}{p}\rfloor$. The function $n\mapsto \lfloor\frac{n}{p}\rfloor$ sends $a_0 + a_1 p +\cdots a_r p^r$ to $a_1 + a_2 p + \cdots + a_r p^{r-1}$, and therefore is extended to the familiar (continuous) Verschiebung operator  $V$ on the Witt ring $W(\F_p)=\Z_p$. Therefore, $\Lambda_f(x) = x-1 - V(x-1)$.
	\end{eg}
	
	\begin{eg}
		\label{eg.Gamma}
		Let $f(x) = \log_p x$. Then we have $\Lambda_f(x) = \lim_{n>0,n\to x} \log_p(\prod_{1\le m\le n,p\nmid m} m) = \log_{p}\Gamma_p(x+1)$, where $\Gamma_p(x)$ is the Morita $p$-adic Gamma function \cite[§14.1]{Lang}. 
	\end{eg}
	
	\begin{eg}
		\label{eg.proportioanl-sum}
		Finally let $f$ be continuous on $\Z_p^\times$, $0\le\frac{a}{N}\le 1$ with $\gcd(N,p) =1$, and $q = p^f\equiv 1\bmod N$. Then
		\begin{align*}
			\lim_{n\to \infty} \sum_{1\le m<\frac{N-a}{N}q^n , p\nmid m} f(m) = \lim_{n\to \infty} \Lambda_f\left(\frac{(N-a)q^n+a^\flat_N}{N}\right) = \Lambda_f(\frac{a^\flat_N}{N}).
		\end{align*}
	\end{eg}
	
	\section{An Elementary Method to Sum Expressions}
	\label{appx.elementary}
	
	In this appendix we sketch how sum expressions can be proved by using only some elementary $p$-adic analysis, following the discussions in previous appendices. We will only treat $p$-adic zeta functions with odd $p$, since they are technically easier.
	
	\subsection{Preliminaries on power sums.}
	
	For $k,n\in \Z_{\ge 0}$ we let $S_k(n) = \sum_{0\le d\le n} d^k$. For example, we have $S_0(n) = n+1$, $S_1(n) = \frac{n(n+1)}{2}$ and $S_2(n) = \frac{n(n+1)(2n+1)}{6}$. In general, if we write
	\begin{align*}
		\frac{Xe^{yX}}{e^X-1} = \sum_{r\ge 0}\frac{X^r}{r!}B_r(y),
	\end{align*}
	
	then
	\begin{align*}
		S_r(n) = \frac{B_{r+1}(n+1) - B_{r+1}}{r+1}.
	\end{align*}
	
	Here $B_r(0) = B_r$ is the usual Bernoulli number.
	
	\vspace{3mm}
	
	Following Appendix \ref{appx.convergence}, for $k\in \Z_{\ge 0}$ we consider the function $\Lambda_{k}(x) = \lim_{n\in\Z_{>0},n\to x}\sum_{0\le d< n, p\nmid d} d^k$. For any $n\in \Z_{>0}$ we have
	\begin{align*}
		\Lambda_{k}(n) = S_k(n-1) - p^k S_k(\lfloor \frac{n-1}{p} \rfloor) = \frac{B_{k+1}(n) - p^k B_{k+1}(\lfloor \frac{n-1}{p}\rfloor +1)}{k+1} - (1-p^k)\frac{B_{k+1}}{k+1}.
	\end{align*}
	
	Hence
	\begin{align}
		\label{equation.Lambda-bernoulli}
		\Lambda_k(x) = \frac{B_{k+1}(x) - p^k B_{k+1}(V(x-1)+1)}{k+1} - (1-p^k)\frac{B_{k+1}}{k+1},
	\end{align}
	
	where $V$ is the Verschiebung operator.

	\subsection{The proof.}
	
	Let $N$ and $q$ be as in §\ref{subsec.introduction-notation}. For an even number $k$, define:
	\begin{align*}
		L_k(s) 
		= \lim_{n\to \infty}\sum_{1\le m< q^n, p\nmid m} \omega^{k-1}(m)\chx{m}^{-s}m^\sharp_N.
	\end{align*}
	
	Then, by density, to prove the sum expression for $-(1-\omega^k(N)\chx{N}^{-s+1})L_p(s,\omega^k)$, it suffices to show that for all $r\in \Z_{>0}$ such that $r \equiv k\bmod p-1$, $L_k(1-r)=-(1-N^r)L_p(1-r,\omega^k)$. First note that by the Ferrero-Greenberg permutation (applied to $\Phi^n_{a-1}\setminus\Phi^n_a$), we have
	\begin{align}
		\label{equation.L_k-special}
		L_k(1-r) = N^{r-1}\lim_{n\to \infty}\sum_{1\le a\le N} a
		\sum_{\frac{N-a}{N}q^n\le m< \frac{N-a+1}{N}q^n, p\nmid m}
		m^{r-1}.
	\end{align}
	
	Using \eqref{equation.Lambda-bernoulli}, for all $a$ in $[1,N]$, it is not hard to see that
	\begin{align*}
		&\sum_{\frac{N-a}{N}q^n\le m< \frac{N-a+1}{N}q^n, p\nmid m} m^{r-1}\\
		=& \frac{1}{r}\left\{B_r\left(\frac{a-1}{N}\right) - B_r\left(\frac{a^\flat_N}{N}\right) - p^{r-1}\left[B_r\left(V\left(-\frac{N-a+1}{N}\right)+1\right) - B_r\left(V\left(-\frac{N-a^\flat}{N}\right)+1\right)\right]\right\}.
	\end{align*}
	
	To simplify, note the following Abel-type summations:
	\begin{align*}
		\sum_{1\le a\le N} a \left[B_r\left(\frac{a-1}{N}\right) - B_r\left(\frac{a^\flat_N}{N}\right)\right] = \sum_{0\le a<N} B_r\left(\frac{a}{N}\right) - N B_r,
	\end{align*}
	
	and
	\begin{align*}
		\sum_{1\le a\le N} a
		\Big[ B_r(V(-\frac{N-a+1}{N})+1) - B_r(V(-\frac{N-a^\flat_N}{N})+1)\Big]
		=&\sum_{1\le a\le N} B_r(V(-\frac{N-a^\flat}{N})+1) - N B_r(V(-1)+1)\\
		=&\sum_{1\le a\le N} B_r(\frac{N-(a/p)^\sharp_N}{N}) - N B_r\\
		=&\sum_{0\le a<N}B_r(\frac{a}{N}) - N B_r.
	\end{align*}

	Combinining these equalities, we have
	\begin{align*}
		L_k(1-r) = N^{r-1}(1-p^{r-1})\frac{1}{r}\left[\sum_{0\le a<N}B_r(\frac{a}{N}) - N B_r\right].
	\end{align*}
	
	To proceed we use the identity 
	\begin{align*}
		\sum_{0\le a<N} B_r(\frac{a}{N}) = N^{1-r} B_r,
	\end{align*}
	
	which can be seen from the Taylor expansion of
	\begin{align*}
		\sum_{0\le a<N} \frac{X e^{\frac{a}{N} X}}{e^X-1} = \frac{X}{e^{X/N}-1}.
	\end{align*}
	
	In turn,
	\begin{align*}
		L_k(1-r) = N^{r-1}(1-p^{r-1})\frac{1}{r}(N^{1-r}-N)B_r = (1-N^r)(1-p^{r-1})\frac{B_r}{r}.
	\end{align*}
	
	This concludes $L_k(1-r)= -(1-N^r)L_p(1-r,\omega^r)$ by \eqref{equation.interpolation-classical}.

	\section{From Stickelberger Elements to Sum Expressions}
	\label{appx.stickelberger}
	
	In this appendix, we will show that the sum expressions for $p$-adic $L$-functions can be directly derived from Stickelberger elements. Our main reference is \cite{Iw69}, complemented by Chapter 6 of \cite{Iw}. For simplicity we assume $p\ge 3$, for $p=2$ the argument runs \textit{mutatis mutandis}.
	
	\subsection{Background on Stickelberger elements.} 
	
	Let $\chi$ be a character of conductor $N\ge 1$ such that $\gcd(N,p)=1$, and $\psi$ be one of conductor $p^e$ for some $e\ge 0$. For any $n\ge e-1$, the $n$-th Stickelberger element attached to $(\chi,\psi)$ is defined to be \cite[p200]{Iw69}
	\begin{align*}
		\xi_n = -\frac{1}{Np^{n+1}}\sum_{1\le a<Np^{n+1}, p\nmid a} \chi\psi(a)a [\gamma(a)^{-1}] \in \bar{\Q}_p[\Gamma_n].
	\end{align*}
	
	Here $\Gamma_n = 1+p\Zp/1+p^{n+1}\Zp$, $\gamma: (\Z/N)^\times\times \Z_p^\times \to 1+p\Z_p$ is the natual projection, and $[\gamma(a)]$ is the element in $\bar{\Q}_p[\Gamma_n]$ corresponding to $\gamma(a)$. When $\chi$ is non-trivial, $\xi_n$ is known to be in $\bar{\Z}_p[\Gamma_n]$ \cite[pp.~75-76]{Iw}. When $\chi$ is trivial, hence $N=1$, a regularization process is required: Let $c\in \Z_p^\times$ and define the regularized Stickelberger element 
	\begin{align*}
		\eta_{c,n} = (1-\psi(c)c[\gamma(c)^{-1}])\xi_n.
	\end{align*}
	
	It can be shown that $\eta_{c,n}$ has coefficients in $\bar{\Z}_p$; explicitly
	\begin{align*}
		\eta_{c,n} = -\frac{1}{p^{n+1}}\sum_{1\le a<p^{n+1}, p\nmid a} \left\{\psi(a) a[\gamma(a)^{-1}] - \psi(ac)ac[\gamma(ac)^{-1}] \right\}
		= \sum_{1\le a<p^{n+1}, p\nmid a}\psi(ac)\frac{ac-(ac)^\flat_{p^{n+1}}}{p^{n+1}} [\gamma(ac)^{-1}].
	\end{align*}
	
	As discussed in \cite[pp.~72-73]{Iw}, when $\chi\psi$ is odd, the Stickelberger elements are compatible with respect to projections, and thus we may consider the limits $\xi=\lim_{n\to\infty} \xi_n$ (when $\chi$ is non-trivial) and $\eta_c=\lim_{n\to\infty}\eta_{c,n}$ (when $\chi$ is trivial). Now, let $\varphi_s: \plim_n \bar{\Z}_p[\Gamma_n] \to \C_p$ be the specialization map sending $\gamma(a)$ to $\gamma(a)^{-s}$, then \cite[§3]{Iw69} has shown that
	\begin{align}
		\label{equation.interpolation-stickelberger-zeta}
		\varphi_s(\eta_c) = (1-\psi\omega(c)\chx{c}^{1+s})L_p(-s,\psi\omega),
	\end{align}
	
	and 
	\begin{align}
		\label{equation.interpolation-stickelberger-dirichlet}
		\varphi_s(\xi) = L_p(-s,\chi\psi\omega).
	\end{align}
	
	\subsection{Re-deriving sum expressions: zeta case.}
	
	By approximating $\eta_c$ by $\eta_{c,n}$ for all $n\ge e$, we have the following limit by \eqref{equation.interpolation-stickelberger-zeta}:
	\begin{align*}
		(1-\psi\omega(c)\chx{c}^{1+s})L_p(-s,\psi\omega)
		&= \lim_{n\to\infty} \sum_{1\le a<p^n, p\nmid a}
		\left(\frac{ac-(ac)^\flat_{p^n}}{p^n}\right)\psi(ac)\chx{ac}^s\\
		&= \lim_{n\to\infty} \sum_{1\le a<p^n, p\nmid a}
		-\left(\frac{(a/c)^\flat_{p^n}c-a}{p^n}\right)\psi(a)\chx{a}^s - \frac{c-1}{2}\lim_{n\to\infty}\sum_{1\le a<p^n, p\nmid a}\psi(a)\chx{a}^s\\
		&= -\lim_{n\to\infty} \sum_{1\le a<p^n, p\nmid a}\left(\frac{a}{p^n}-\frac{1}{2} - c \left[\frac{(a/c)^\flat_{p^n}}{p^n}-\frac{1}{2}\right]\right) \psi(a)\chx{a}^s.
	\end{align*}
	
	As such, to obtain the sum expression when $c\in \Z_{>1}$, one may proceed by employing the identity $(\frac{a}{c})^\flat_{p^n} = \frac{a+p^n(-a/p^n)^\flat_{c}}{c}$.
	
	\subsection{Re-deriving sum expressions: Dirichlet case.}
	
	Now assume the character $\chi$ is non-trivial, so we may take the limit over $\xi_n$'s as they are already bounded. As such, by \eqref{equation.interpolation-stickelberger-dirichlet}:
	\begin{align*}
		L_p(-s,\chi\psi\omega)
		=& -\lim_{n\to\infty} \frac{1}{Np^n} \sum_{1\le a<Np^n, p\nmid a} \chi\psi(a) a\chx{a}^s\\
		=& -\lim_{n\to\infty} \frac{1}{Np^n} \sum_{1\le a<p^n, p\nmid a} \psi(a)\chx{a}^s
		\sum_{0\le d<N} 
		\chi(dp^n+a) (dp^n+a)\\
		=& -\lim_{n\to\infty} \sum_{1\le a<p^n, p\nmid a} \psi(a)\chx{a}^s\left[\frac{1}{N}\sum_{0\le d<N} \chi(a+dp^n)d \right]\\
		=& -\lim_{n\to\infty}\sum_{1\le a<p^n, p\nmid a} \psi(a)\chx{a}^s \mu_\chi(a+p^n\Zp).
	\end{align*}
	
	\begin{rem}
		\label{rem.stickelberger}
		It is no coincidence that, in the calculations above, the coefficients of Stickelberger elements give rise to values of the periods attached to $\mu_\chi$. Indeed, define the unconventional Stickelberger elements by (\textit{cf.}~\cite[§2.1]{Lang})
		\begin{align*}
			\Theta_n = \sum_{a\in \Z/Np^{n+1}} \left(\frac{a^\flat_{Np^{n+1}}}{Np^{n+1}}-\frac{1}{2}\right)\chi(a)[a] \in \bar{\Q}[\Z/Np^{n+1}].
		\end{align*}
		
		Then we can project $\Theta_n$ to $\theta_n \in \bar{\Q}[\Z/p^{n+1}]$. Explicitly,
		\begin{align*}
			\theta_n = \sum_{0\le a<p^{n+1}} [a]\sum_{0\le d<N} \frac{\chi(a+dp^{n+1}) d}{N},
		\end{align*}
		
		which can be thus regarded as an element in $\Z_p[\chi][\Z/p^n]$. Note that the restriction of $\theta_n$ to $\Z_p[\chi][(\Z/p^{n+1})^\times]$ followed by a projection to $\Z_p[\chi][\Gamma_n]$ recovers $-\xi_n$. On the other hand, using the additivity of the first Bernoulli polynomial \cite[p36, \textbf{B 4}]{Lang}, it can be easily shown that the Stickelberger elements $\Theta_n$'s give rise to a distribution on $\plim_n \Z/Np^n = \Z/N \times \Z_p$, and so do $\theta_n$'s on $\Z_p$; in the latter case a measure by integrality. Moreover, this measure is exactly $\mu_{\chi}$, and we get the explicit period formula for free by reading off the coefficients of $\theta_n$'s.
	\end{rem}

	\bibliographystyle{alpha}
	\bibliography{sumExprKL}
\end{document}